\title[Second Inner Variations and Local Minimizers]{On the second inner variations of Allen-Cahn type energies and applications to local minimizers}
\author{Nam Q. Le$^{*}$}
\thanks{$^{*}$ Department of Mathematics, Indiana University, Bloomington, 831 E 3rd St,
Bloomington, IN 47405, USA. Telephone: (1) 812-855-8538 \\ and Institute of Mathematics, Vietnam Academy of Science and Technology, 18 Hoang Quoc Viet, Hanoi, Vietnam
\newline {\it Email adress:} nqle@indiana.edu}
\newcommand{\review}[2][\right]{\relax
\ifx#1\right\relax \left.\fi#2#1\rvert}
\let\abs=\envert
 \newtheorem{definition}{Definition}[section]
\newtheorem{theorem}{Theorem}[section]
\newtheorem{claim}{Claim}[section]
\newtheorem{propo}{Proposition}[section]
\newtheorem{remark}{Remark}[section]
\newtheorem{cor}{Corollary}[section]
\newtheorem{lemma}{Lemma}[section]
\newcommand{\bef}{\begin{flushright}}
\newcommand{\eef}{\end{flushright}}
\newcommand{\eval}[2][\right]{\relax
\ifx#1\right\relax \left.\fi#2#1\rvert}
\let\abs=\envert
\numberwithin{equation}{section}
\newcommand\e{\varepsilon}  
\newcommand{\h}{\hspace*{.24in}}
\def\h{\hspace*{.24in}}
\def\beq{\begin{eqnarray*}}
\def\eeq{\end{eqnarray*}}
\def\RR{\mbox{$I\hspace{-.06in}R$}}
\newenvironment{myindentpar}[1]%
{\begin{list}{}%
         {\setlength{\leftmargin}{#1}}%
         \item[]%
}
{\end{list}}
\begin{document}

\maketitle
\begin{abstract}
In this paper, we obtain an explicit formula for the discrepancy between the limit of 
the second inner variations of $p$-Laplace Allen-Cahn energies and the second inner variation of their $\Gamma$-limit which is the area 
functional. Our analysis explains the mysterious discrepancy term found in our previous paper  \cite{Le} in the case $p=2$. The discrepancy 
term turns out to be related to the convergence 
of certain 4-tensors which are absent in the usual Allen-Cahn functional.  These (hidden) 4-tensors suggest that, in the complex-valued Ginzburg-Landau setting, we 
should expect a different discrepancy term which we are able to identify. Along the way, we partially answer a question of Kohn and Sternberg \cite{KS} by giving a relation between 
the limit of second variations of the Allen-Cahn functional and the second inner variation of the area functional at local minimizers. Moreover, our analysis reveals an interesting 
identity connecting second inner variation and Poincar\'e inequality for area-minimizing surfaces with volume constraint in the work of Sternberg and Zumbrun \cite{SZ2}.

\medskip

\begin{center}
{\bf R\'esum\'e}
\end{center}
Dans cet article, nous obtenons une formule explicite pour la diff\'erence entre la limite 
des deuxi\`emes variations internes des \'energies du p-Laplacien de Allen- Cahn et la seconde variation interne de leur $\Gamma$ -limite qui est la
fonctionnelle d'aire. Notre analyse explique la diff\'erence myst\'erieuse trouv\'ee dans notre article pr\'ec\'edent \cite{Le} dans les cas $ p = 2$. Cette diff\'erence se r\'ev\`ele \^etre en rapport avec la convergence
de certains 4-tenseurs qui sont absents dans la fonctionnelle Allen - Cahn habituelle. Ces 4 - tenseurs (cach\'es) sugg\`erent que, dans le cadre de Ginzburg- Landau \`a valeurs complexe, nous
nous devons attendre \`a un terme de divergence diff\'erent que nous sommes en mesure d'identifier. En particulier, nous r\'epondons en partie une question de Kohn et Sternberg \cite{KS} 
en donnant une relation entre
la limite des deuxi\`emes variations de la fonctionnelle Allen- Cahn et la deuxi\`eme variation interne de la fonctionnelle  d'aire aux points de minimum locaux. De plus, notre analyse 
r\'ev\`ele une identit\'e int\'eressante
qui relie les deuxi\`emes variations internes et l'in\'egalit\'e de Poincar\'e pour les surfaces d'aire minimisante avec contrainte de volume dans le travail de 
Sternberg et 
Zumbrun \cite{SZ2}.

\medskip
\noindent {\bf Keywords:} Allen-Cahn functional, local minimizer, Poincar\'e inequality, second variation, volume constrained area-minimizing surface.

\medskip
\noindent {\bf 2000 Mathematical Subject Classification:} 49A50, 49J45, 58E12.
\end{abstract}
\pagenumbering{arabic}

\section{Introduction and statement of the main results}   

This paper is concerned with the relationship between the second {\it variations, inner variations }of Allen-Cahn type energies and their Gamma-limits together with applications to local minimizers and Poincar\'e inequality. The main results of the paper are Theorems \ref{thm-ACp}, \ref{isolate2}, \ref{isolate3} and \ref{mainSV}.

The typical functionals we consider are of the form
$$A(u):= \int_{\Omega} F(u(x), \nabla u(x)) dx$$
where $\Omega$ is an open smooth bounded domain in $\RR^{N}$ ($N\geq 2$) and $F: \RR\times \RR^N\rightarrow \RR$ is a smooth function. We recall that the first and second (usual) variations of $A$ at $u\in C^{2}(\Omega)$ with respect to $\varphi \in C_{c}^{1}(\Omega)$, denoted by $dA(u,\varphi)$ and $d^2 A(u,\varphi)$ respectively, are defined by 
$$dA(u,\varphi)= \left.\frac{d}{dt}\right\rvert_{t=0} A(u + t\varphi),~d^{2} A(u,\varphi) = \left.\frac{d^2}{dt^2}\right\rvert_{t=0} A(u + t\varphi).$$
On the other hand, we can deform the domain $\Omega$ using velocity and acceleration vector fields $\eta, \zeta\in (C_{c}^{1}(\Omega))^{N}$. In fact, for $t$ sufficiently small, the map 
\begin{equation}\Phi_{t} (x) = x + t\eta(x) + \frac{t^2}{2}\zeta(x)
\label{map-deform}
\end{equation} is a diffeomorphism of $\Omega$ into itself. The first and second inner variations of $A$  at $u$ with respect to the velocity and acceleration vector fields $\eta$ and $\zeta$, denoted by $\delta A(u,\eta,\zeta)$ and $\delta^{2} A(u,\eta,\zeta)$ respectively, are defined by 

\begin{equation*}
\delta A(u,\eta,\zeta) = \left.\frac{d}{dt}\right\rvert_{t=0} A(u\circ\Phi_t^{-1}),~
 \delta^{2} A(u,\eta,\zeta) = \left.\frac{d^2}{dt^2}\right\rvert_{t=0} A(u\circ\Phi_t^{-1}).
\end{equation*}
The relationship between these two notions of variations will be clarified in Proposition \ref{all_var}.\\
{\bf Notation.} We define the area functional $E$ on $L^1(\Omega)$ by
\begin{equation*}
E(u_0)=\left\{
 \begin{alignedat}{1}
   \frac{1}{2}\int_{\Omega}|\nabla u_0| ~&~ \text{if} ~u_0\in BV (\Omega, \{1, -1\}), \\\
\infty~&~ \text{otherwise}.
 \end{alignedat} 
  \right.
  \end{equation*}
For a function of bounded variation $u_0\in BV (\Omega, \{1, -1\})$ taking values $\pm 1$, $|\nabla u_0|$ denotes the total variation of the vector-valued measure $\nabla u_0$ (see \cite{Simon}), and $\Gamma= \partial\{x\in \Omega: u_0(x)=1\}\cap \Omega$ the interface separating 
the phases of $u_0$. If $\Gamma$ is sufficiently regular (say $C^1$) then $E(u_0)=\mathcal{H}^{N-1}(\Gamma)$ and hence we identify $$E(u_0)\equiv E(\Gamma)=\mathcal{H}^{N-1}(\Gamma)$$ where $\mathcal{H}^{N-1}$ denotes the $(N-1)$-dimensional Hausdorff measure. In this paper, we are mostly concerned with $C^2$ interface $\Gamma$. Throughout, we denote by $\stackrel{\rightarrow}{n} = (n_{1},\cdots,n_{N})$ the outward unit normal to the region enclosed by $\Gamma$; and $(\cdot,\cdot)$ the standard inner product on $\RR^{N}$.

\subsection{Second inner variations of Allen-Cahn energies, defect measure and hidden 4-tensors}
In a previous paper \cite{Le}, we studied the relationship between the second inner variations of the Allen-Cahn functionals arising in the van der Waals-Cahn-Hilliard gradient theory of phase transitions \cite{AC}
\begin{equation} E_{\varepsilon}(u)=\int_{\Omega}\left(\frac{\varepsilon \abs{\nabla u}^2}{2} +\frac{(1-u^2)^2}{2\varepsilon}\right) dx ~(\e>0),\label{MM}\end{equation} 
where $u: \Omega\rightarrow \RR$ 
 and the second inner variation of their Gamma-limit which is the area functional 
\begin{equation*}E_2(u_0)\equiv \frac{4}{3} E(u_0)=\frac{2}{3}\int_{\Omega} |\nabla u_0| = \frac{4}{3} \mathcal{H}^{N-1}(\Gamma) := \frac{4}{3}E(\Gamma)\equiv E_2(\Gamma).\end{equation*} 
Contrary to the convergence of the first inner variations, we found in \cite{Le} a mysterious positive discrepancy term $\frac{4}{3}\int_{\Gamma} (\stackrel{\rightarrow}{n},\stackrel{\rightarrow}{n}\cdot\nabla\eta)^2d\mathcal{H}^{N-1}$ in the limit $\e\searrow 0$ of the difference of the second inner variations $\delta^{2} E_{\varepsilon}(u_{\varepsilon},\eta,\zeta)- \delta^{2}E_2(\Gamma,\eta,\zeta)$: If $u_{\e}\rightarrow u_0\in BV (\Omega, \{1, -1\})$ with a $C^2$ interface $\Gamma$ and $\lim_{\e\rightarrow 0} E_{\e}(u_{\e})= E_2 (u_0)\equiv E_2 (\Gamma)$ then for all smooth vector fields $\eta,\zeta\in (C_{c}^{1}(\Omega))^{N}$, we 
have 
\begin{equation*}
\lim_{\varepsilon\rightarrow 0}\delta^{2} E_{\varepsilon}(u_{\varepsilon},\eta,\zeta) = \frac{4}{3}\left\{
 \delta^{2}E(\Gamma,\eta,\zeta) + \int_{\Gamma} (\stackrel{\rightarrow}{n},\stackrel{\rightarrow}{n}\cdot\nabla\eta)^2d\mathcal{H}^{N-1}\right\}.\end{equation*} 

We view this discrepancy term as the {\it defect} measure of $\delta^{2} E_{\varepsilon}(u^{\varepsilon},\eta,\zeta)- \delta^{2}E_2(\Gamma,\eta,\zeta)$. This type of defect measure also appears in a related context. In \cite{RW}, R\"oger and Weber considered the stochastic Allen-Cahn equation
$$d u_{\e} = \left(\Delta u_{\e}-2\e^{-2} u_{\e}(u_{\e}^2-1)\right) dt + \nabla u_{\e}\cdot X(x, \circ dt)$$
where $X$ is a vector field valued Brownian motion. It is shown that at each time $t$, the defect measure between the localized energies associated with $u_{\e}(t)$ and the localized surface area of the sharp interface $\Gamma(t)$ of $u_{\e}(t)$ is of the form $\int_{\Gamma(t)}(\stackrel{\rightarrow}{n}, \stackrel{\rightarrow}{n}\cdot\nabla X^k)^2 d\mathcal{H}^{N-1}$
for suitable smooth time dependent vector fields $X^k$ on $\Omega$. 
It\^o formula is responsible for this extra term.\\

The purpose of this paper is to deterministically and conceptually explain the defect measure $\int_{\Gamma} (\stackrel{\rightarrow}{n},\stackrel{\rightarrow}{n}\cdot\nabla\eta)^2 d\mathcal{H}^{N-1}$, and reveal that this is a codimension-one phenomenon.
To do this, we imbed the usual Allen-Cahn functionals $E_{\e}$ into a family $E_{\e, p}$ of $p$-Laplace Allen-Cahn functionals that still Gamma-converge to the area functional. 
To be more precise, for $1\leq p<\infty$, let
$$E_{\e, p} (u)=\int_{\Omega} \left( \frac{\e^{p-1} |\nabla u|^p}{p} + \frac{(p-1)W(u)}{p\e}\right) dx,~~W(u) \equiv(1-u^2)^2.$$ 
Then, from the work of Bouchitt\'e \cite{Bou}, we know that $E_{\e, p}$ Gamma-converges to 
$$ E_{p}(\Gamma)= c_p \mathcal{H}^{n-1}(\Gamma)\equiv c_p E(\Gamma)~\text{with}~c_p:=\int_{-1}^{1} (W(s))^{\frac{p-1}{p}} ds.$$
In particular, the following conditions of Gamma-convergence hold:
\begin{myindentpar}{1cm}
1. ({\it Liminf inequality}) If $v_{\e_{i}}\rightarrow v_0$ in $L^1(\Omega)$ for some sequence $\e_{i}\rightarrow 0$ then
$$\liminf_{i\rightarrow \infty} E_{\e_{i}, p} (v_{\e_i})\geq E_p (v_0).$$
2. ({\it Existence of recovery sequence}) For any $w_0\in L^1(\Omega)$ there is a sequence $\{w_{\e_j}\}$ with $w_{\e_j}\rightarrow w_0$ in $L^1(\Omega)$ and 
$\lim_{j\rightarrow \infty} E_{\e_{j}, p} (w_{\e_j}) = E_p (w_0).$
\end{myindentpar}
Note that $E_{\e, 2} = E_{\e}$.
Observe that, when $p=1$ and $u\in BV (\Omega, \{1, -1\})$ with interface $\Gamma$, $E_{\e, p}(u)= E_1(u)\equiv 2E (\Gamma)= E_1(\Gamma).$ Thus, we expect that the second inner variations of $E_{\e,1}$ and $E_1$ are the same. This suggests that the extra term $\int_{\Gamma} (\stackrel{\rightarrow}{n},\stackrel{\rightarrow}{n}\cdot\nabla\eta)^2 d\mathcal{H}^{N-1}$ eventually disappears when passing the difference of the second inner variations $\delta^{2} E_{\varepsilon, p}(u_{\varepsilon},\eta,\zeta)- \delta^{2}E_p(\Gamma,\eta,\zeta)$ to the limits $\e\rightarrow 0$ first and then $p\searrow 1$. This is precisely what we prove here in our first main theorem.
\begin{theorem}
Fix $1<p<\infty$.
Let $u_{\e}$ be a sequence of functions that converges in $L^1(\Omega)$ to a function $u_0\in BV (\Omega, \{1, -1\})$ with a $C^2$ interface $\Gamma=\partial\{u_0=1\}\cap\Omega$. Assume that
 $\lim_{\varepsilon \rightarrow 0} E_{\varepsilon, p}(u_{\varepsilon}) = 
E_p(\Gamma).$
Then, for all smooth vector fields $\eta,\zeta\in (C_{c}^{1}(\Omega))^{N}$, we 
have
\begin{equation*}\lim_{\varepsilon\rightarrow 0}\delta^{2} E_{\varepsilon, p}(u_{\varepsilon}, \eta, \zeta) = c_p\left\{\delta^{2}E(\Gamma,\eta,\zeta) + (p-1)\int_{\Gamma} (\stackrel{\rightarrow}{n},\stackrel{\rightarrow}{n}\cdot\nabla\eta)^2 d\mathcal{H}^{N-1}\right\}.
\label{thm-ACp}
\end{equation*} 
\end{theorem}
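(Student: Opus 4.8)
\emph{Step 1 (an explicit formula for $\delta^{2}E_{\varepsilon,p}$).} The plan is first to reduce the statement to a limit of explicit volume integrals. Writing $u_\varepsilon^{t}=u_\varepsilon\circ\Phi_t^{-1}$ and changing variables $y=\Phi_t(x)$, and using that the potential depends only on $u_\varepsilon$ (which is transported unchanged), I would record
$$E_{\varepsilon,p}(u_\varepsilon\circ\Phi_t^{-1})=\int_\Omega\Big[\frac{\varepsilon^{p-1}}{p}\big|(D\Phi_t)^{-T}\nabla u_\varepsilon\big|^{p}+\frac{(p-1)W(u_\varepsilon)}{p\varepsilon}\Big]\det D\Phi_t\,dx.$$
Expanding $(D\Phi_t)^{-T}$ and $\det D\Phi_t$ to second order in $t$ via $D\Phi_t=I+tD\eta+\tfrac{t^2}{2}D\zeta$ and differentiating twice at $t=0$ (this is the content of Proposition \ref{all_var}), I obtain $\delta^{2}E_{\varepsilon,p}(u_\varepsilon,\eta,\zeta)$ as a sum of three kinds of integrals: a \emph{potential} integral pairing $\tfrac{(p-1)W(u_\varepsilon)}{p\varepsilon}$ with the geometric weight $(\mathrm{div}\,\eta)^2-\mathrm{tr}((D\eta)^2)+\mathrm{div}\,\zeta$; \emph{gradient-quadratic} integrals pairing $\varepsilon^{p-1}|\nabla u_\varepsilon|^{p-2}$ with quadratic forms in $\nabla u_\varepsilon$ built from $D\eta,D\zeta$; and a single \emph{gradient-quartic} integral, coming from the anisotropic part $\varepsilon^{p-1}(p-2)|\nabla u_\varepsilon|^{p-4}\,\nabla u_\varepsilon\otimes\nabla u_\varepsilon$ of $F_{\xi\xi}$, of the form $(p-2)\,\varepsilon^{p-1}|\nabla u_\varepsilon|^{p-4}(\nabla u_\varepsilon,\nabla u_\varepsilon\cdot\nabla\eta)^2$. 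This last term is absent exactly when $p=2$ and is the ``hidden $4$-tensor''.

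\emph{Step 2 (passage to the limit).} The heart of the argument is a Reshetnyak-type convergence forced by the hypothesis $\lim_\varepsilon E_{\varepsilon,p}(u_\varepsilon)=E_p(\Gamma)$. The weighted Young inequality gives the pointwise bound
$$\frac{\varepsilon^{p-1}|\nabla u_\varepsilon|^{p}}{p}+\frac{(p-1)W(u_\varepsilon)}{p\varepsilon}\ \ge\ W(u_\varepsilon)^{\frac{p-1}{p}}|\nabla u_\varepsilon|=\big|\nabla(\Psi\circ u_\varepsilon)\big|,\qquad \Psi'=W^{\frac{p-1}{p}},$$
so that $E_{\varepsilon,p}(u_\varepsilon)\ge\int_\Omega|\nabla(\Psi\circ u_\varepsilon)|$, whose right-hand side converges to $|\Psi(1)-\Psi(-1)|\,\mathcal H^{N-1}(\Gamma)=c_p\mathcal H^{N-1}(\Gamma)=E_p(\Gamma)$. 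Since the hypothesis forces equality in the limit, both Young's inequality and the lower semicontinuity of total variation saturate asymptotically: the former pins the energy-density ratio gradient$\,:\,$potential to $1:(p-1)$, while the latter allows a Reshetnyak continuity argument on the vector measures $\nabla(\Psi\circ u_\varepsilon)$, yielding that the direction $\nabla u_\varepsilon/|\nabla u_\varepsilon|$ concentrates, in the energy-weighted sense, at $\stackrel{\rightarrow}{n}$ (up to sign). Combining the two, for every continuous $F$ on the sphere I would deduce
$$\int_\Omega F\!\Big(\frac{\nabla u_\varepsilon}{|\nabla u_\varepsilon|}\Big)\frac{\varepsilon^{p-1}|\nabla u_\varepsilon|^{p}}{p}\,dx\ \longrightarrow\ \frac{c_p}{p}\int_\Gamma F(\stackrel{\rightarrow}{n})\,d\mathcal H^{N-1},$$
and, specializing $F$ to quadratic and quartic monomials, obtain the convergence of both the $2$-tensor $\varepsilon^{p-1}|\nabla u_\varepsilon|^{p-2}\nabla u_\varepsilon\otimes\nabla u_\varepsilon$ and the hidden $4$-tensor $\varepsilon^{p-1}|\nabla u_\varepsilon|^{p-4}(\nabla u_\varepsilon)^{\otimes4}$, both to $c_p$ times the corresponding powers of $\stackrel{\rightarrow}{n}$ on $\Gamma$; the potential measure converges to $\tfrac{(p-1)c_p}{p}\mathcal H^{N-1}$ on $\Gamma$.

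\emph{Step 3 (collecting terms and the algebraic identity).} Substituting these limits into the formula of Step 1, every gradient-quadratic and gradient-quartic integral acquires the common prefactor $c_p$ (since $\varepsilon^{p-1}|\nabla u_\varepsilon|^{p-2}\nabla u_\varepsilon\otimes\nabla u_\varepsilon=p\cdot\tfrac{\varepsilon^{p-1}|\nabla u_\varepsilon|^{p}}{p}\,\widehat{\nabla u_\varepsilon}\otimes\widehat{\nabla u_\varepsilon}$), and the two geometric-weight integrals, with prefactors $\tfrac{c_p}{p}$ from the gradient part and $\tfrac{(p-1)c_p}{p}$ from the potential part, add to $c_p$. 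In particular the quartic term contributes exactly $(p-2)\,c_p\int_\Gamma(\stackrel{\rightarrow}{n},\stackrel{\rightarrow}{n}\cdot\nabla\eta)^2\,d\mathcal H^{N-1}$. It then remains to verify the purely geometric ($p$-independent) identity that the surviving terms assemble into $c_p\big[\delta^{2}E(\Gamma,\eta,\zeta)+\int_\Gamma(\stackrel{\rightarrow}{n},\stackrel{\rightarrow}{n}\cdot\nabla\eta)^2\,d\mathcal H^{N-1}\big]$, the extra $+1$ combining with the $(p-2)$ of the quartic term to yield the asserted coefficient $(p-1)$. This reconciliation uses the known expression for $\delta^{2}E(\Gamma,\eta,\zeta)$ from Proposition \ref{all_var} together with the tangential identity $\mathrm{div}\,\zeta-(\stackrel{\rightarrow}{n},\stackrel{\rightarrow}{n}\cdot\nabla\zeta)=\mathrm{div}_\Gamma\zeta$, and it specializes correctly to the case $p=2$ treated in \cite{Le}, where $c_2=\tfrac43$ and $p-1=1$ reproduce the known defect $\tfrac43\int_\Gamma(\stackrel{\rightarrow}{n},\stackrel{\rightarrow}{n}\cdot\nabla\eta)^2\,d\mathcal H^{N-1}$.

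\emph{Main obstacle.} The genuine difficulty lies entirely in Step 2, and specifically in the convergence of the $4$-tensor: the hypothesis provides only energy convergence, not an Euler--Lagrange equation for $u_\varepsilon$, so the fourth moment of the direction field cannot be recovered from the second moment alone, and one must show that the underlying Young measure of directions collapses to $\delta_{\pm\stackrel{\rightarrow}{n}}$. This collapse is precisely what the simultaneous saturation of Young's inequality (fixing the energy split) and of the total-variation inequality (Reshetnyak continuity) deliver, and it is here that the $C^2$ regularity of $\Gamma$ and the well-preparedness are used. The subsequent algebra of Step 3, although lengthy, is mechanical.
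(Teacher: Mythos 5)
Your proposal is correct and follows essentially the same route as the paper: the explicit second-inner-variation formula isolating the $(p-2)$-weighted $4$-tensor term, the Reshetnyak-type convergence of the energy-weighted direction field forced by saturation of Young's inequality and of the lower-semicontinuity bound (the paper's Lemmas 3.1--3.3), and the final algebraic assembly in which $1+(p-2)=p-1$. The only step you leave implicit is the quantitative equipartition argument ($\int_\Omega|a_\e^p-b_\e^q|\,dx\to 0$, proved in the paper via an elementary inequality and H\"older), which is exactly what upgrades the asymptotic saturation of Young's inequality into the $L^1$ replacement of $\e^{p-1}|\nabla u_\e|^p$ by $|\nabla\Phi(u_\e)|$ needed to invoke Reshetnyak.
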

 
In the above theorem, the second inner variation of $E$ at $\Gamma$ with respect to the velocity and acceleration vector fields $\eta$ and $\zeta$ in $(C_c^{1}(\Omega))^{N}$ is defined by 
(see \cite{Simon})
\begin{eqnarray}
 \delta^{2}E(\Gamma,\eta,\zeta)&: =& \left.\frac{d^2}{dt^2}\right\rvert_{t=0} \mathcal{H}^{N-1}(\Phi_t(\Gamma))\nonumber\\ &=& 
 \int_{\Gamma}\left\{ \text{div}^{\Gamma}\zeta + (\text{div}^{\Gamma}\eta)^2 + \sum_{i=1}^{N-1}\abs{(D_{\tau_{i}}\eta)^{\perp}}^2 -
 \sum_{i,j=1}^{N-1}(\tau_{i}\cdot D_{\tau_{j}}\eta)(\tau_{j}\cdot D_{\tau_{i}}\eta)\right\}d\mathcal{H}^{N-1}.
\label{SVE}
\end{eqnarray}
Here 
$\Phi_t$ is 
given by (\ref{map-deform}), $\text{div}^{\Gamma}\varphi$ denotes the tangential divergence of $\varphi$ on $\Gamma$; and for each point $x\in\Gamma$, $\{\tau_{1}(x),\cdots,\tau_{N-1}(x)\}$ is any orthonormal basis for the tangent space $T_{x}(\Gamma)$; for each $\tau\in T_{x}(\Gamma)$, $D_{\tau} \eta$ is the directional derivative and the normal part of $D_{\tau_{i}} \eta$ is denoted by 
 $(D_{\tau_{i}} \eta)^{\perp} = D_{\tau_{i}} \eta -\sum_{j=1}^{N-1}(\tau_{j}\cdot D_{\tau_{i}} \eta)\tau_{j}. $
\begin{remark}
The main results of this paper hold with $\Gamma$ having singular set of lower Hausdorff dimensions; precisely,
$\mathcal{H}^{N-3}(\text{sing}~\Gamma)=0.$
However, for the sake of clarity, we choose to present the main results with the assumption that $\Gamma$ is $C^2$ in $\Omega$.
\label{SingRm}
\end{remark}

Without entering into details of the proof, we explain here why Theorem \ref{thm-ACp} should be true.
By writing down the formula for $\delta^{2} E_{\varepsilon, p}(u_{\varepsilon}, \eta, \zeta)$, we see that all terms, except one, involve 2-tensor
$\e^{p-1} \nabla u_{\e}\otimes \nabla u_{\e}|\nabla u_{\e}|^{p-2}.$
The exception comes from the term involving 4-tensor
$\e^{p-1} \nabla u_{\e}\otimes \nabla u_{\e}\otimes\nabla u_{\e}\otimes \nabla u_\e|\nabla u_{\e}|^{p-4}.$
That is the term
$(p-2) \e^{p-1} (\nabla u_\e)^i (\nabla u_\e)^j |\nabla u_\e|^{p-4}(\nabla u_\e \cdot\nabla \eta)^i (\nabla u_\e \cdot\nabla \eta)^j$
which vanishes in the usual Allen-Cahn functionals $E_{\e}\equiv E_{\e, 2}$ and arises from
\begin{multline*}F_{p_i p_j}(u_\e, \nabla u_\e) (\nabla u_\e \cdot\nabla \eta)^i (\nabla u_\e \cdot\nabla \eta)^j=
\e^{p-1}\delta_{ij}(\nabla u_\e \cdot\nabla \eta)^i (\nabla u_\e \cdot\nabla \eta)^j |\nabla u_\e|^{p-2} \\+ 
(p-2) \e^{p-1} (\nabla u_\e)^i (\nabla u_\e)^j |\nabla u_\e|^{p-4}(\nabla u_\e \cdot\nabla \eta)^i (\nabla u_\e \cdot\nabla \eta)^j,
\end{multline*}
 where
$$F(z, {\bf p}) = \frac{\e^{p-1} |{\bf p}|^p}{p} + \frac{(p-1)W(z)}{p\e}~; {\bf p}=(p_1, \cdots, p_N).$$
The important fact, proved in Lemma \ref{propo-Res}, is the convergence of Reshetnyak type of the following 2-tensors
\begin{equation}\e^{p-1} \nabla u_{\e}\otimes \nabla u_{\e} |\nabla u_{\e}|^{p-2}\rightharpoonup c_p \stackrel{\rightarrow}{n}\otimes \stackrel{\rightarrow}{n}\mathcal{H}^{N-1}\lfloor \Gamma
\label{2nu0}
\end{equation}
and 4-tensors
\begin{equation}\e^{p-1} \nabla u_{\e}\otimes \nabla u_{\e}\otimes\nabla u_{\e}\otimes \nabla u_{\e}|\nabla u_{\e}|^{p-4}\rightharpoonup c_p \stackrel{\rightarrow}{n}\otimes \stackrel{\rightarrow}{n}\otimes \stackrel{\rightarrow}{n} \otimes \stackrel{\rightarrow}{n}\mathcal{H}^{N-1}\lfloor \Gamma.
\label{4nu0}
\end{equation}

Using Reshetnyak type convergence result and passing to the limit in the second inner variations $\delta^{2} E_{\varepsilon, p}(u_{\varepsilon}, \eta, \zeta)$, we can easily write schematically
$$\lim_{\varepsilon\rightarrow 0}\frac{1}{c_p}\delta^{2} E_{\varepsilon, p}(u_{\varepsilon}, \eta, \zeta)= \lim_{\varepsilon\rightarrow 0}\frac{1}{c_2}\delta^{2} E_{\varepsilon, 2}(u_{\varepsilon}, \eta, \zeta) + (p-2)\int_{\Gamma} (\stackrel{\rightarrow}{n},\stackrel{\rightarrow}{n}\cdot\nabla\eta)^2 d\mathcal{H}^{N-1}. $$ 
Thus, if we write
$$\lim_{\varepsilon\rightarrow 0}\frac{1}{c_2}\delta^{2} E_{\varepsilon, 2}(u_{\varepsilon}, \eta, \zeta) = \delta^{2}E(\Gamma,\eta,\zeta) + \text{possible extra term}$$
then we have
$$\lim_{\varepsilon\rightarrow 0}\frac{1}{c_p}\delta^{2} E_{\varepsilon, p}(u_{\varepsilon}, \eta, \zeta)= \delta^{2}E(\Gamma,\eta,\zeta) + \text{possible extra term} + (p-2)\int_{\Gamma} (\stackrel{\rightarrow}{n},\stackrel{\rightarrow}{n}\cdot\nabla\eta)^2 d\mathcal{H}^{N-1}. $$ 
By letting $p\searrow 1$ and noting that the left hand side is expected to be $\delta^{2}E(\Gamma,\eta,\zeta)$, we obtain that 
$$\text{possible extra term} = \int_{\Gamma} (\stackrel{\rightarrow}{n},\stackrel{\rightarrow}{n}\cdot\nabla\eta)^2 d\mathcal{H}^{N-1}.$$
This explains the mysterious extra term in Theorem \ref{thm-ACp} and why Theorem \ref{thm-ACp} should be true. Our analysis reveals that the convergence of 4-tensors in (\ref{4nu0}) is responsible for the appearance of the extra term when $p>1$ and its disappearance in the limit $p\searrow 1$. These 4-tensors are hidden in the usual Allen-Cahn functional $E_{\e}$. 

We now turn to the case where $u_{\e}$ is complex-valued and satisfying similar assumptions as in Theorem \ref{thm-ACp}, as in the case of critical points of the 
Ginzburg-Landau functional in superconductivity. We still have a convergence of 2-tensors as in (\ref{2nu0}) (see (\ref{resGL})) while for 4-tensors, (\ref{4nu0}) does not seem to hold 
anymore. Therefore, the discrepancy term in the limit of $\delta^{2} E_{\varepsilon, p}(u_{\varepsilon},\eta,\zeta)- \delta^{2}E_p(\Gamma,\eta,\zeta)$ for Ginzburg-Landau is 
expected to be of different nature than in the case of the Allen-Cahn functionals. We find an alternative formula for this discrepancy term which, interestingly, involves Jacobian determinant and the $\bar{\partial}$-operator. As a consequence, we show  that the stability and instability of Ginzburg-Landau vortices in higher dimensions pass to the limit provided that the limiting vortex filament is smooth and connected. All of these will be made precise in Section \ref{sec-GL}. \\

As an application of Theorem \ref{thm-ACp}, we partially answer a question of Kohn and Sternberg \cite{KS} by giving a relation between the limit of second variations of the Allen-Cahn functional and the second variation of the area functional at local minimizers. This is the subject of the next section.
\subsection{Local minimizers of Allen-Cahn type functionals} 
In \cite{KS}, Kohn and Sternberg discovered a very interesting connection between isolated local minimizers of the area functional and the existence of local minimizers of $E_\e$. They proved the following theorem.
\begin{theorem}(\cite[Theorem 2.1]{KS})
Let $\Omega$ be a bounded domain in $\RR^N$ with Lipschitz boundary, and suppose that $u_0$ is an isolated $L^1$-local minimizer of $E.$ Then there exists $\e_0>0$ and a family $\{u_{\e}\}_{\e<\e_0}$ such that
\begin{center}
$u_\e$ is an $L^1$-local minimizer of $E_\e$, and
$\|u_\e-u_0\|_{L^1(\Omega)}\rightarrow 0$ as $\e\rightarrow 0.$
\end{center}
\label{isolate1}
\end{theorem}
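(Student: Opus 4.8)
The plan is to construct $\{u_\e\}$ as minimizers of $E_\e$ over a fixed small closed $L^1$-ball centred at $u_0$, and then to use the $\Gamma$-convergence $E_\e\to E_2=\frac{4}{3}E$ to show that these constrained minimizers eventually lie in the open interior of the ball, so that the constraint is inactive and they are genuine $L^1$-local minimizers of $E_\e$. Since $u_0$ is an \emph{isolated} $L^1$-local minimizer of $E$ -- equivalently of $E_2=\frac{4}{3}E$ -- I may fix $\delta>0$ so that $u_0$ is the \emph{unique} minimizer of $E_2$ over $\overline{B}:=\{v\in L^1(\Og): \|v-u_0\|_{L^1(\Og)}\le\delta\}$; that is, $E_2(u_0)<E_2(v)$ for every $v\in\overline{B}$ with $v\ne u_0$. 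This strict uniqueness is the ingredient that will force the limit of the $u_\e$ to be exactly $u_0$.

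First I would prove, for each fixed $\e$, the existence of a minimizer $u_\e$ of $E_\e$ over $\overline{B}$ by the direct method. A minimizing sequence $\{v_k\}\subset\overline{B}$ has bounded $E_\e$-energy, hence is bounded in $W^{1,2}(\Og)$ (the gradient and potential terms being jointly coercive); passing to a subsequence $v_k\rightharpoonup u_\e$ weakly in $W^{1,2}(\Og)$, the Rellich--Kondrachov theorem gives $v_k\to u_\e$ strongly in $L^1(\Og)$, so the constraint $u_\e\in\overline{B}$ persists, and weak lower semicontinuity of $E_\e$ (convexity in the gradient together with the compact embedding for the potential) yields minimality. Next I would extract the correct energy bound: taking a recovery sequence $\{w_\e\}$ for $u_0$, so that $w_\e\to u_0$ in $L^1(\Og)$ and $E_\e(w_\e)\to E_2(u_0)$, I note that $w_\e\in\overline{B}$ for all small $\e$, whence $E_\e(u_\e)\le E_\e(w_\e)$ by minimality and therefore $\limsup_{\e\to0}E_\e(u_\e)\le E_2(u_0)$.

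With this uniform energy bound, the compactness that accompanies the $\Gamma$-convergence of the Allen--Cahn energies makes $\{u_\e\}$ precompact in $L^1(\Og)$: along any subsequence, $u_\e\to u_*$ in $L^1(\Og)$ with $u_*\in\overline{B}$. The liminf inequality gives $E_2(u_*)\le\liminf_{\e\to0}E_\e(u_\e)\le E_2(u_0)$, while minimality of $u_0$ over $\overline{B}$ gives $E_2(u_*)\ge E_2(u_0)$; hence $E_2(u_*)=E_2(u_0)$ and, by the uniqueness coming from isolation, $u_*=u_0$. Since every subsequence has a further subsequence with this same limit, the whole family converges: $\|u_\e-u_0\|_{L^1(\Og)}\to0$. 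Finally, choosing $\e_0>0$ so that $\|u_\e-u_0\|_{L^1(\Og)}<\delta/2$ for all $\e<\e_0$ places $u_\e$ in the interior of $\overline{B}$; any $v$ with $\|v-u_\e\|_{L^1(\Og)}<\delta/2$ then lies in $\overline{B}$, so $E_\e(u_\e)\le E_\e(v)$, proving that $u_\e$ is an unconstrained $L^1$-local minimizer of $E_\e$.

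I expect the main obstacle to be excluding the scenario in which the constrained minimizers accumulate on the boundary $\{\|v-u_0\|_{L^1(\Og)}=\delta\}$ instead of converging to $u_0$; this is exactly where the \emph{isolated} (uniqueness) hypothesis is indispensable, for without it the limit $u_*$ could be a distinct competitor of equal energy on the boundary, the constraint would remain active, and $u_\e$ would not be shown to be a local minimizer of $E_\e$. A secondary point demanding care is the compatibility of the $L^1$-ball constraint with the weak $W^{1,2}$ topology used in the direct method, which is resolved by the compact Sobolev embedding on the bounded domain $\Og$.
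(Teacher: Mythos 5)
Your proposal is correct and follows essentially the same route as the paper's own sketch (itself the Kohn--Sternberg argument): minimize $E_\e$ over a fixed closed $L^1$-ball around $u_0$, use a recovery sequence to bound $\limsup_{\e\to 0}E_\e(u_\e)$ by $E_2(u_0)$, and then use the liminf inequality together with the isolation of $u_0$ to force $u_\e\to u_0$, so the constraint becomes inactive. The extra details you supply (coercivity and lower semicontinuity for the direct method, the Modica--Mortola compactness) are exactly what the paper leaves implicit.
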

We recall relevant concepts here. We call $u_0$ an isolated $L^1$-local minimizer of $E$ if
$$E(u_0)< E(u)~\text{whenever}~0<\|u-u_0\|_{L^1(\Omega)}\leq\delta$$
for some $\delta>0$. Similarly, we call $u_\e$ an $L^1$-local minimizer of $E_\e$ if for some $\delta>0$, we have
$$E_\e(u_\e)\leq E_{\e}(u)~\text{whenever}~\|u_\e-u\|_{L^1(\Omega)}\leq\delta.$$
It is still an open question whether $u_\e$ is isolated. Kohn and Sternberg also asked \cite[Remark 2. 3]{KS} if there is any connection between the second variation of $E_\e$ at $u_{\e}$ given by
$$d^2 E_{\e}(u_\e, \varphi)= \int_{\Omega} [\e |\nabla \varphi|^2 + 2\e^{-1} (3u_\e^2-1)\varphi^2] dx$$
and the second (inner) variation of $E$ at $\Gamma$. \\
\h In this paper, we partially answer the above question of Kohn and Sternberg by providing a relationship between the second variation of $E_{\e, p}$ and the second inner variation of $E$ at local minimizers in the more general setting of $p$-Laplace Allen-Cahn energies. This is the content of the following theorem.
\begin{theorem} 
\label{isolate2}Fix $p\in (1, \infty)$. Let $\Omega$ be a bounded domain in $\RR^N$ with Lipschitz boundary, and suppose that $u_0$ is an isolated $L^1$-local minimizer of $E$ with a $C^2$ interface $\Gamma=\partial\{u_0=1\}\cap\Omega$. Then there exists $\e_0>0$ and a family $\{u_{\e, p}\}_{\e<\e_0}$ such that
\begin{center}
$u_{\e, p}$ is an $L^1$-local minimizer of $E_{\e. p}$, and
$\|u_{\e, p}-u_0\|_{L^1(\Omega)}\rightarrow 0$ as $\e\rightarrow 0.$
\end{center} 
With these $u_{\e, p}$, for all smooth vector fields $\eta,\zeta\in (C_{c}^{1}(\Omega))^{N}$, we 
have \begin{equation}\lim_{\varepsilon\rightarrow 0}d^{2} E_{\varepsilon, p}(u_{\varepsilon, p}, -\nabla u_{\e, p}\cdot\eta) = c_p\left\{
 \delta^{2}E(\Gamma,\eta,\zeta) +  (p-1)\int_{\Gamma} (\stackrel{\rightarrow}{n},\stackrel{\rightarrow}{n}\cdot\nabla\eta)^2 d\mathcal{H}^{N-1}\right\}.\label{discrep}\end{equation} 
\end{theorem}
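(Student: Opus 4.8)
The plan is to separate Theorem \ref{isolate2} into two independent parts: (i) the existence of the local minimizers $u_{\e,p}$ with $\|u_{\e,p}-u_0\|_{L^1(\Omega)}\to 0$, which is a $p$-Laplace analogue of the Kohn--Sternberg Theorem \ref{isolate1}; and (ii) the limiting identity (\ref{discrep}), which I would reduce to Theorem \ref{thm-ACp} by exploiting the relation between outer and inner variations at critical points furnished by Proposition \ref{all_var}.

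For part (i) I would follow the Kohn--Sternberg strategy. Fix $\delta>0$ from the isolation hypothesis on $u_0$. For each fixed $\e$, minimizing $E_{\e,p}$ over the closed ball $\{u:\|u-u_0\|_{L^1(\Omega)}\le\delta\}$ produces a constrained minimizer $u_{\e,p}$ by the direct method, using convexity of the integrand in $\nabla u$ and the compact embedding of $W^{1,p}(\Omega)$ into $L^1(\Omega)$. To control the behaviour as $\e\to 0$, the Young-inequality lower bound
$$E_{\e,p}(u)\ge \int_{\Omega}|\nabla(\Psi(u))|\,dx,\qquad \Psi(s)=\int_0^s W(\sigma)^{\frac{p-1}{p}}\,d\sigma,$$
gives a uniform $BV$ bound on $\{\Psi(u_{\e,p})\}$ and hence, by the standard Modica--Mortola compactness argument, $L^1$-precompactness of $\{u_{\e,p}\}$. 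Comparing with a recovery sequence for $u_0$ yields $\limsup_{\e}E_{\e,p}(u_{\e,p})\le E_p(u_0)$, so any $L^1$-limit point $v$ satisfies $\|v-u_0\|_{L^1(\Omega)}\le\delta$ and, by Bouchitt\'e's liminf inequality, $E_p(v)\le E_p(u_0)$. Isolation forces $v=u_0$; thus $u_{\e,p}\to u_0$ in $L^1(\Omega)$ and, for small $\e$, $\|u_{\e,p}-u_0\|_{L^1(\Omega)}<\delta$, placing $u_{\e,p}$ strictly inside the ball. Consequently $u_{\e,p}$ is a genuine $L^1$-local minimizer of $E_{\e,p}$, and the same comparison produces the energy convergence $\lim_{\e}E_{\e,p}(u_{\e,p})=E_p(\Gamma)$ that Theorem \ref{thm-ACp} requires.

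For part (ii), the key point is that a local minimizer is a critical point, so the first outer variation vanishes: $dE_{\e,p}(u_{\e,p},\varphi)=0$ for every admissible $\varphi$. The deformation $\Phi_t$ induces, through $u_{\e,p}\circ\Phi_t^{-1}$, the first-order test function $\varphi=-\nabla u_{\e,p}\cdot\eta$ together with a second-order term in $\eta,\zeta$. Expanding to second order and applying Proposition \ref{all_var}, the second inner and second outer variations along this field differ only by a multiple of the first variation, which vanishes at the critical point; hence
$$d^2 E_{\e,p}(u_{\e,p},-\nabla u_{\e,p}\cdot\eta)=\delta^2 E_{\e,p}(u_{\e,p},\eta,\zeta).$$
Letting $\e\to 0$ on the right-hand side and invoking Theorem \ref{thm-ACp}, which is legitimate thanks to the energy convergence just established, delivers precisely (\ref{discrep}).

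The step I expect to be the main obstacle is regularity. For the identity above to be meaningful, $-\nabla u_{\e,p}\cdot\eta$ must be an admissible test function and the second-variation coefficients must make sense, which requires $u_{\e,p}\in W^{2,2}_{\mathrm{loc}}$ on the relevant region. When $p\ne 2$ the Euler--Lagrange equation is the degenerate/singular equation
$$-\e^{p-1}\,\text{div}\!\left(|\nabla u_{\e,p}|^{p-2}\nabla u_{\e,p}\right)+\frac{(p-1)W'(u_{\e,p})}{p\e}=0,$$
whose solutions are in general only $C^{1,\alpha}$, with possible loss of second-order regularity on the critical set $\{\nabla u_{\e,p}=0\}$. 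Establishing the needed second-order bounds---or circumventing them by approximation, noting that $\nabla u_{\e,p}\cdot\eta$ is concentrated near $\Gamma$ where $|\nabla u_{\e,p}|$ is bounded below---is the delicate point; once it is settled, the remaining arguments are either routine $\Gamma$-convergence or direct applications of the already-established Theorem \ref{thm-ACp}.
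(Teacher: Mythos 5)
Your proposal is correct and follows essentially the same route as the paper: the Kohn--Sternberg ball-minimization argument (with the liminf inequality and isolation forcing $u_{\e,p}\to u_0$ and $E_{\e,p}(u_{\e,p})\to E_p(\Gamma)$) for the first part, and the identity $d^2 E_{\e,p}(u_{\e,p},-\nabla u_{\e,p}\cdot\eta)=\delta^2 E_{\e,p}(u_{\e,p},\eta,\zeta)$ from Proposition \ref{all_var} combined with criticality and Theorem \ref{thm-ACp} for the second. The regularity concern you flag for $p\neq 2$ is not addressed in the paper's proof either, which passes over it silently.
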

We recall that, if $\Gamma$ is an isolated $L^1$-local minimizer of the area 
functional, then its smoothness is guaranteed in dimensions $N\leq 7$ while its singular set has Hausdorff dimension at most $N-8$ in dimensions $N>7$; see \cite[Theorem 37.7]{Simon}.
Thus, by Remark \ref{SingRm}, the result of Theorem \ref{isolate2} hold for all dimensions $N\geq 2$ without the 
assumption that $\Gamma$ is $C^2$ in $\Omega$. It is worth noting that, by recent work of Tonegawa and Wickramasekera \cite{TW}, the above result on the smoothness and/or singularity of
$\Gamma$ still holds when $\Gamma$ is the limiting interface of a sequence of stable solutions of 
the Allen-Cahn equation.\\

In the special case $p=2$, Theorem \ref{isolate2} gives the upper semicontinuity of the eigenvalues of the operators $-\e \Delta + 2\e^{-1}(3u_{\e}^2-1)$ in the limit. The precise statement is as follows. 
\begin{cor} Assume that $p=2$. Let $u_0$ and $u_{\e}=u_{\e, 2}$ be as in Theorem \ref{isolate2}.  Assume that $\Gamma$ is connected. Let $\lambda_{\e, k}$ be the $k$-th eigenvalue of the operator $-\e \Delta + 2\e^{-1}(3u_{\e}^2-1)$ in $\Omega$ with zero Dirichlet condition on $\partial\Omega$. Let $\lambda_k$ be the $k$-th eigenvalue of the operator $-\Delta_{\Gamma} + |A|^2$ in $\Gamma$ with zero Dirichlet condition on $\partial\Gamma$. Then
$$\limsup_{\e\rightarrow 0}\frac{\lambda_{\e, k}}{\e}\leq \lambda_k.$$
\label{eigen_cor}
\end{cor}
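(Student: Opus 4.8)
The plan is to combine the Courant--Fischer min--max principle with the inner-variation trial functions $\varphi=-\nabla u_\e\cdot\eta$ supplied by Theorem~\ref{isolate2}. Since $\lambda_{\e,k}$ is the $k$-th eigenvalue of the quadratic form $\varphi\mapsto d^2E_{\e,2}(u_\e,\varphi)=\int_\Og[\e|\nabla\varphi|^2+2\e^{-1}(3u_\e^2-1)\varphi^2]\,dx$ relative to the $L^2(\Og)$ norm, we have
$$\lambda_{\e,k}=\min_{\substack{V\subset H_0^1(\Og)\\ \dim V=k}}\ \max_{0\neq\varphi\in V}\ \frac{d^2E_{\e,2}(u_\e,\varphi)}{\int_\Og\varphi^2\,dx}.$$
Let $\psi_1,\dots,\psi_k$ be $L^2(\Gamma)$-orthonormal Dirichlet eigenfunctions of $-\Delta_\Gamma+|A|^2$ on $\Gamma$ and $W_k=\mathrm{span}\{\psi_1,\dots,\psi_k\}$, so that $\max_{0\neq\psi\in W_k}\big(\int_\Gamma(|\nabla_\Gamma\psi|^2+|A|^2\psi^2)\,d\mathcal{H}^{N-1}\big)/\big(\int_\Gamma\psi^2\,d\mathcal{H}^{N-1}\big)=\lambda_k$. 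I extend each $\psi_l$ to be constant along the normal lines of $\Gamma$ in a tubular neighbourhood, multiply by the signed-distance extension of $\stackrel{\rightarrow}{n}$, and truncate, producing $\eta_l\in(C_c^1(\Og))^N$ with $\eta_l=\psi_l\stackrel{\rightarrow}{n}$ on $\Gamma$ and, crucially, $(\stackrel{\rightarrow}{n},\stackrel{\rightarrow}{n}\cdot\nabla\eta_l)=\partial_{\stackrel{\rightarrow}{n}}\psi_l=0$ on $\Gamma$. Set $\varphi_l^\e:=-\nabla u_\e\cdot\eta_l\in H_0^1(\Og)$ and $V_\e:=\mathrm{span}\{\varphi_1^\e,\dots,\varphi_k^\e\}$.

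Next I would pass to the limit in the two symmetric bilinear forms carried by $V_\e$. Polarizing the conclusion of Theorem~\ref{isolate2} (applied to $\eta_l$, $\eta_m$ and $\eta_l\pm\eta_m$) and evaluating the second inner variation (\ref{SVE}) on the normal field $\eta=\psi\stackrel{\rightarrow}{n}$ over the area-minimizing, hence minimal, interface $\Gamma$ --- where $\int_\Gamma\mathrm{div}^\Gamma\zeta=0$, $\mathrm{div}^\Gamma(\psi\stackrel{\rightarrow}{n})=0$, and the chosen extension makes the discrepancy integrand $(\stackrel{\rightarrow}{n},\stackrel{\rightarrow}{n}\cdot\nabla\eta_l)$ vanish --- I expect
$$d^2E_{\e,2}(u_\e;\varphi_l^\e,\varphi_m^\e)\ \longrightarrow\ c_2\int_\Gamma\big(\nabla_\Gamma\psi_l\cdot\nabla_\Gamma\psi_m-|A|^2\psi_l\psi_m\big)\,d\mathcal{H}^{N-1}.$$
Simultaneously, the Reshetnyak-type convergence (\ref{2nu0}) with $p=2$, tested against the continuous field $\eta_l\otimes\eta_m$, yields the $L^2$-concentration
$$\e\int_\Og\varphi_l^\e\varphi_m^\e\,dx=\int_\Og(\eta_l)_i(\eta_m)_j\,\e\,(\nabla u_\e)_i(\nabla u_\e)_j\,dx\ \longrightarrow\ c_2\int_\Gamma\psi_l\psi_m\,d\mathcal{H}^{N-1}.$$
Because the $\psi_l$ are orthonormal, the limiting mass matrix $c_2\big(\int_\Gamma\psi_l\psi_m\big)=c_2\,\mathrm{Id}$ is positive definite, so for small $\e$ the $\varphi_l^\e$ are linearly independent and $\dim V_\e=k$.

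Finally, for $\varphi=\sum_lc_l\varphi_l^\e$, writing $\psi=\sum_lc_l\psi_l\in W_k$ and dividing the numerator by the $\e$-weighted denominator, the two displays give
$$\frac1\e\,\frac{d^2E_{\e,2}(u_\e,\varphi)}{\int_\Og\varphi^2\,dx}\ \longrightarrow\ \frac{\int_\Gamma\big(|\nabla_\Gamma\psi|^2-|A|^2\psi^2\big)\,d\mathcal{H}^{N-1}}{\int_\Gamma\psi^2\,d\mathcal{H}^{N-1}}\ \le\ \frac{\int_\Gamma\big(|\nabla_\Gamma\psi|^2+|A|^2\psi^2\big)\,d\mathcal{H}^{N-1}}{\int_\Gamma\psi^2\,d\mathcal{H}^{N-1}}\ \le\ \lambda_k,$$
the convergence being uniform on the finite-dimensional $V_\e$ because it reduces to convergence of the pair of symmetric $k\times k$ matrices above towards a limit with nondegenerate mass part, for which the largest generalized eigenvalue is continuous. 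Inserting $V_\e$ into the min--max formula, $\lambda_{\e,k}\le\max_{0\neq\varphi\in V_\e}d^2E_{\e,2}(u_\e,\varphi)/\int_\Og\varphi^2\,dx$, and taking $\limsup_{\e\to0}$ then yields $\limsup_{\e\to0}\lambda_{\e,k}/\e\le\lambda_k$, as claimed. The elementary bound $-|A|^2\le|A|^2$ used above is precisely where the stated operator $-\Delta_\Gamma+|A|^2$ (rather than the sharper Jacobi operator $-\Delta_\Gamma-|A|^2$ that the construction actually produces in the limit) enters.

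The step demanding the most care is the $L^2$-concentration of the trial functions: since $\norm{\varphi_l^\e}_{L^2(\Og)}^2$ diverges like $\e^{-1}$, the entire argument rests on pinning down the exact leading constant $c_2\int_\Gamma\psi_l\psi_m$ through (\ref{2nu0}) and on the nondegeneracy of the resulting mass matrix, which is what lets the diffuse spectral data descend to the sharp-interface operator. This is where the hypotheses are used: the energy convergence $E_{\e,2}(u_\e)\to E_2(\Gamma)$ together with the connectedness of $\Gamma$ force a single clean transition layer of multiplicity one, so that $\e\,\nabla u_\e\otimes\nabla u_\e$ concentrates on $\Gamma$ carrying precisely the factor $c_2$ and the Reshetnyak limit takes the asserted form. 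A secondary verification is that the normal-constant extension can be realized inside $(C_c^1(\Og))^N$ while preserving $\psi_l|_{\partial\Gamma}=0$, so that $\varphi_l^\e\in H_0^1(\Og)$ and the discrepancy term genuinely disappears in the limit.
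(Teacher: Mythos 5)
Your proposal is correct and follows essentially the same route as the paper: trial functions $-\nabla u_\e\cdot\eta_l$ built from normal extensions of the eigenfunctions of $-\Delta_\Gamma+|A|^2$ with $(\stackrel{\rightarrow}{n},\stackrel{\rightarrow}{n}\cdot\nabla\eta_l)=0$, polarization of the limit in Theorem \ref{isolate2} for the numerator, the Reshetnyak convergence (\ref{2nu0}) for the $\e$-weighted $L^2$ mass matrix (whose nondegeneracy gives linear independence of the trial space for small $\e$), and the min--max characterization of $\lambda_{\e,k}$. Your closing observation that the bound rests on $-|A|^2\le|A|^2$, so that the construction really produces the Jacobi form $\int_\Gamma(|\nabla_\Gamma\psi|^2-|A|^2\psi^2)$ in the limit, is exactly how the paper's quadratic form $Q$ enters as well.
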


The method of the proof of Theorem \ref{isolate2} answering a question of Kohn and Sternberg can be appreciated more when a volume constraint is present in the Allen-Cahn functional $E_{\e}$ and the area functional $E$. This is the subject of the next section.
\subsection{The second inner variations of Allen-Cahn type energies with volume constraint}

The purpose of this section is to prove an analog of Theorem \ref{isolate2} for isolated local minimizers of the area functional with volume constraint, say 
\begin{equation}\bar{u}_{\Omega}:=\frac{1}{\abs{\Omega}}\int_{\Omega} u(x) dx = m\in (-1,1).
\label{volc} 
\end{equation}
Suppose that $u_0\in BV (\Omega, \{1, -1\})$ with interface $\Gamma=\partial E_0\cap\Omega$ 
where
$$E_0=\{x\in\Omega: u_0(x)=1\}.$$ 

When $E_0$ is stable for the area functional $E$ with volume constraint (\ref{volc}) for $u=u_0$, Sternberg and Zumbrun \cite{SZ2} derived the following Poincar\'e inequality
\begin{equation}J(\xi):=\int_{\Gamma} \left(|\nabla_{\Gamma}\xi|^2 -|A_{\Gamma}|^2|\xi|^2\right) d\mathcal{H}^{N-1} - \int_{\partial\Gamma\cap\partial\Omega} A_{\partial\Omega}(\stackrel{\rightarrow}{n}, \stackrel{\rightarrow}{n})|\xi|^2 d\mathcal{H}^{N-2}\geq 0
\label{PI1}
\end{equation}
for all smooth functions $\xi$ satisfying
$\displaystyle\int_{\Gamma} \xi(x) d\mathcal{H}^{n-1}(x)=0.$ Here we used the notation $A_M$ to denote the second fundamental form of the manifold $M$.

We recall here relevant concepts from \cite{SZ2}. A family of subsets of $\Omega$ which are deformations of $E_0$, $\{E_t\}_{t\in (-T, T)}$ for some $T>0$,  is called {\bf admissible} if
\begin{center}
$\chi_{E_t}\rightarrow \chi_{E_0}$ in $L^1(\Omega)$ as $t\rightarrow 0$,~
$t\rightarrow \mathcal{H}^{N-1}(\partial E_t\cap\Omega)$ is twice differentiable at $t=0$,
and $|E_t|= |E_0| + o(t^2)$.
\end{center}
\begin{definition}
We will call $E_0$ stationary for the area functional $E$ with volume constraint (\ref{volc}) if $\left.\frac{d}{dt}\right\rvert_{t=0}\mathcal{H}^{N-1}(\partial E_t\cap\Omega)=0$ for all admissible families $\{E_t\}$. We will call $E_0$ stable if $E_0$ is stationary and $\left.\frac{d^2}{dt^2}\right\rvert_{t=0}\mathcal{H}^{N-1}(\partial E_t\cap\Omega)\geq 0$ for  all admissible families $\{E_t\}$.
\end{definition}
As in the calculation (\ref{SVE}) which also holds for vector fields compactly supported in $\RR^N$ \cite{Simon}, given $\eta, \zeta\in (C^1(\overline{\Omega}))^{N}$, we extend them to be compactly supported vector fields in $\RR^N$.  For the purpose of 
calculating the second inner variation $\delta^2 E(\Gamma, \eta, \zeta)$ of $E$ with volume constraint, we have the following definition which is motivated by (\ref{SVE}).
\begin{definition} A family $\tilde E_{t}=\Phi_t(E_0)$ of deformations of $E_0$ where $\Phi_t$ is defined by (\ref{map-deform}) is called {\bf domain admissible} if this family preserves the volume of $E_0$ up to second order in $t$, that is, 
$|\tilde E_t|= |E_0| + o(t^2).$
\label{t2def}
\end{definition}
Note that, while $\Phi_t$ in (\ref{map-deform}) is primarily defined for vector fields $\eta$ and $\zeta$ compactly supported in $\Omega$, it is not the case here in Definition \ref{t2def}.
In general, $\tilde E_t$ in Definition \ref{t2def} can go outside of $\Omega$.

The next theorem reveals the connection between Poincar\'e inequality and the second inner variation for functionals with volume constraint.
\begin{theorem}
With the notations as above,
\begin{myindentpar}{1cm}
(i)The family $\displaystyle\{\tilde E_t=\Phi_t(E_0)\}$ is domain admissible only if $\int_{E_0} \text{div}~ \eta~ dx=0.$ Vice versa, 
whenever $\eta$ satisfies $\int_{E_0} \text{div}~ \eta~ dx=0$, one can choose $\zeta=- (div\eta)\eta + (\eta\cdot\nabla)\eta$ so that the family $\displaystyle \tilde E_t$ becomes domain admissible.
If $\tilde E_t$ is domain admissible then a particular second inner variation of $E(\Gamma)$ with volume constraint (\ref{volc}) and velocity $\eta$ is 
$\delta^2 E (\Gamma, \eta, \zeta^{\eta})$ where
$\zeta^{\eta}:= - (div\eta)\eta + (\eta\cdot\nabla)\eta.$ In this formula, 
the $i$-th component of $(\eta\cdot\nabla)\eta$ is 
$\displaystyle\sum_{j}\frac{\partial\eta^i}{\partial x_j}\eta^j.$\\
(ii) In the special case where $E_0$ is stationary for the area functional $E$ with volume constraint (\ref{volc}), $\eta$ is a smooth vector field  tangent to $\partial\Omega$, normal to $\Gamma$ with $(\stackrel{\rightarrow}{n}, \stackrel{\rightarrow}{n}\cdot\nabla \eta)=0$ on $\Gamma$ and
$\displaystyle\int_{\Gamma} \eta(x)\cdot\stackrel{\rightarrow}{n}(x) d\mathcal{H}^{N-1}(x)=0,$
we have
$$\delta^2 E (\Gamma, \eta, \zeta^{\eta})= J(\eta\cdot \stackrel{\rightarrow}{n}).$$
Moreover, if $E_0$ is stable 
then the Poincar\'e inequality
$J(\eta\cdot \stackrel{\rightarrow}{n})\geq 0$ holds.\\
(iii) Let $u_{\e}$ and $u_{0}$ be as in Theorem \ref{isolate2} but now equipped with the volume constraint (\ref{volc}). Let $\eta\in (C_{c}^{2}(\Omega))^{N}$ be such that $\displaystyle\int_{\Gamma} \eta \cdot \stackrel{\rightarrow}{n}\mathcal{H}^{N-1}=0$. Then for any $C^2$ perturbation vector field $\eta^{\e}\in (C_{c}^{2}(\Omega))^{N}$ of $\eta$ satisfying 
$$\lim_{\e\rightarrow 0}\|\eta^{\e}-\eta\|_{C^{2}(\overline{\Omega})}=0,~\int_{\Omega} \nabla u_{\e}\cdot\eta^{\e} dx=0,$$ we have
$$\lim_{\e\rightarrow 0}d^2 E_{\e} (u_{\e},  -\nabla u_{\e}\cdot\eta^{\e}) = c_2\left\{
 \delta^{2}E(\Gamma,\eta,\zeta^{\eta}) +  \int_{\Gamma} (\stackrel{\rightarrow}{n},\stackrel{\rightarrow}{n}\cdot\nabla\eta)^2 d\mathcal{H}^{N-1}\right\}.$$

\end{myindentpar}
\label{isolate3}
\end{theorem}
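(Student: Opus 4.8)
The plan is to establish the three parts in order, using (i) as an algebraic lemma that feeds both (ii) and (iii). For part (i), I would compute the volume $|\tilde E_t|=\int_{E_0}\det D\Phi_t\,dx$ by expanding the Jacobian of the map $\Phi_t$ in (\ref{map-deform}). Writing $D\Phi_t=I+tD\eta+\tfrac{t^2}{2}D\zeta$ and using the standard second-order expansion of the determinant gives
$$|\tilde E_t|=|E_0|+t\int_{E_0}\text{div}\,\eta\,dx+\tfrac{t^2}{2}\int_{E_0}\big[\text{div}\,\zeta+(\text{div}\,\eta)^2-\text{tr}((D\eta)^2)\big]\,dx+o(t^2).$$
Domain admissibility forces the $O(t)$ coefficient to vanish, which is exactly $\int_{E_0}\text{div}\,\eta\,dx=0$; this is the asserted necessary condition. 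For the converse, the key is the pointwise algebraic identity $\text{div}\,\zeta^{\eta}+(\text{div}\,\eta)^2-\text{tr}((D\eta)^2)=0$ for $\zeta^{\eta}=-(\text{div}\,\eta)\eta+(\eta\cdot\nabla)\eta$, which I would verify by direct differentiation; it makes the $O(t^2)$ coefficient vanish identically, so $\int_{E_0}\text{div}\,\eta=0$ already yields domain admissibility with this $\zeta^{\eta}$.

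For part (ii), I would substitute $\eta=\phi\stackrel{\rightarrow}{n}$ with $\phi=\eta\cdot\stackrel{\rightarrow}{n}$ into formula (\ref{SVE}). The two curvature terms there collapse, via the Weingarten relation $D_{\tau_i}\stackrel{\rightarrow}{n}\in T_x\Gamma$, to $|\nabla_\Gamma\phi|^2-|A_\Gamma|^2\phi^2$. For the remaining pair $\text{div}^{\Gamma}\zeta^{\eta}+(\text{div}^{\Gamma}\eta)^2$ I would use the hypothesis $(\stackrel{\rightarrow}{n},\stackrel{\rightarrow}{n}\cdot\nabla\eta)=0$ to obtain $\text{div}\,\eta=\text{div}^{\Gamma}\eta=\phi H$ and $\zeta^{\eta}\cdot\stackrel{\rightarrow}{n}=-\phi^2H$ on $\Gamma$, and then apply the tangential divergence theorem. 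The interior contributions $\pm H^2\phi^2$ cancel, leaving only $\int_{\partial\Gamma\cap\partial\Omega}\zeta^{\eta}\cdot\nu$, where $\nu$ is the outward conormal of $\partial\Gamma$ in $\Gamma$. Here stationarity of $E_0$ is used to force $\Gamma$ to meet $\partial\Omega$ orthogonally, so $\nu$ is the outer normal of $\partial\Omega$; since $\eta$ is tangent to $\partial\Omega$, the Gauss formula yields $\zeta^{\eta}\cdot\nu=-A_{\partial\Omega}(\stackrel{\rightarrow}{n},\stackrel{\rightarrow}{n})\phi^2$. Collecting the pieces gives exactly $\delta^2 E(\Gamma,\eta,\zeta^{\eta})=J(\phi)$, and the stability claim is then immediate: when $E_0$ is stable, the domain-admissible family from (i) has nonnegative second inner variation, so $J(\eta\cdot\stackrel{\rightarrow}{n})\geq 0$.

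For part (iii), the governing idea is that $u_\e$ is a volume-constrained local minimizer, hence a constrained critical point: $dE_\e(u_\e,\psi)=\lambda_\e\int_\Omega\psi$ for a Lagrange multiplier $\lambda_\e$. By the relationship between the two kinds of variation (Proposition \ref{all_var}), $\delta^2 E_\e(u_\e,\eta^{\e},\zeta^{\eta^{\e}})=d^2 E_\e(u_\e,-\nabla u_\e\cdot\eta^{\e})+dE_\e(u_\e,\ddot u_0)$, where $\ddot u_0$ is the $O(t^2)$ coefficient of $u_\e\circ\Phi_t^{-1}$. The choice $\zeta^{\eta^{\e}}$ is decisive: the part (i) identity forces $\int_\Omega\ddot u_0=0$, so $dE_\e(u_\e,\ddot u_0)=\lambda_\e\int_\Omega\ddot u_0=0$ and the inner and outer second variations coincide exactly, independently of $\lambda_\e$. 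I would then pass to the limit by Theorem \ref{thm-ACp} with $p=2$; since $\eta^{\e}\to\eta$ in $C^2$ and therefore $\zeta^{\eta^{\e}}\to\zeta^{\eta}$ in $C^1$, the Reshetnyak-type convergences (\ref{2nu0})--(\ref{4nu0}) are paired with uniformly convergent coefficients, so the limit is the same as with the fixed fields $\eta,\zeta^{\eta}$, giving the stated formula. The constraint $\int_\Omega\nabla u_\e\cdot\eta^{\e}=0$ enters to guarantee that such perturbations $\eta^{\e}$ exist and that $-\nabla u_\e\cdot\eta^{\e}$ is volume-preserving to first order, consistent with $\int_\Gamma\eta\cdot\stackrel{\rightarrow}{n}=0$ in the limit.

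The main obstacle is twofold. In (ii), the delicate point is the boundary term: one must justify the orthogonal intersection of $\Gamma$ with $\partial\Omega$ from stationarity and track sign conventions so that the Gauss formula reproduces precisely the $-A_{\partial\Omega}(\stackrel{\rightarrow}{n},\stackrel{\rightarrow}{n})$ term of $J$. In (iii), the subtlety is that Theorem \ref{thm-ACp} is stated for fixed vector fields, so one must upgrade it to accommodate the $\e$-dependent $\eta^{\e},\zeta^{\eta^{\e}}$; this is where the $C^2$ convergence of $\eta^{\e}$ and the continuity of the Reshetnyak limit in the test fields are essential.
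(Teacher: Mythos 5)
Your overall route coincides with the paper's in all three parts: the determinant expansion plus the pointwise identity $\mathrm{div}\,\zeta^{\eta}+(\mathrm{div}\,\eta)^2-\mathrm{trace}((\nabla\eta)^2)=0$ for (i); the substitution $\eta=\xi\stackrel{\rightarrow}{n}$, the collapse of the curvature terms to $|\nabla_{\Gamma}\xi|^2-|A_{\Gamma}|^2|\xi|^2$, and the tangential divergence theorem with the orthogonality of $\partial\Gamma$ and $\partial\Omega$ for the identity in (ii); and, for (iii), the Lagrange-multiplier structure together with the observation that the particular choice of acceleration field makes the $O(t^2)$ coefficient $X_{\e}$ a divergence (indeed $X_{\e}=\mathrm{div}((\nabla u_{\e}\cdot\eta^{\e})\eta^{\e})$), so that $dE_{\e}(u_{\e},X_{\e})=\lambda_{\e}\int_{\Omega}X_{\e}\,dx=0$ and the inner and outer second variations coincide, after which one passes to the limit using the $C^{2}$ convergence $\eta^{\e}\to\eta$ and the uniform energy bound.

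There is, however, one genuine gap: the stability conclusion $J(\eta\cdot\stackrel{\rightarrow}{n})\geq 0$ in part (ii) is not ``immediate'' from applying the stability hypothesis to the domain admissible family $\tilde E_t=\Phi_t(E_0)$. Stability is defined only for \emph{admissible} families, which must consist of subsets of $\Omega$ with $\chi_{E_t}\to\chi_{E_0}$ in $L^1$, $|E_t|=|E_0|+o(t^2)$, and $t\mapsto\mathcal{H}^{N-1}(\partial E_t\cap\Omega)$ twice differentiable at $t=0$. Since $\eta$ is merely tangent to $\partial\Omega$ (not compactly supported in $\Omega$) and $\zeta^{\eta}$ need not be tangent at all, $\Phi_t(E_0)$ can leave $\Omega$, so $\tilde E_t$ is in general not an admissible competitor. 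The paper repairs this by setting $E_t=\Phi_t(E_0)\cap\Omega$ and proving that $|\Omega\backslash\Phi_t(\Omega)|+|\Phi_t(\Omega)\backslash\Omega|=o(t^2)$ (Claim \ref{vol_claim}), which itself requires a nontrivial modification of the normal component of $\eta$ near the offending boundary pieces; only then does one get $|E_t|=|E_0|+o(t^2)$ and the chain $J(\xi)=\left.\frac{d^2}{dt^2}\right\rvert_{t=0}\mathcal{H}^{N-1}(\Phi_t(\Gamma))\geq\left.\frac{d^2}{dt^2}\right\rvert_{t=0}\mathcal{H}^{N-1}(\partial E_t\cap\Omega)\geq 0$, where the first inequality uses $\Phi_t(\Gamma)\supset\partial E_t\cap\Omega$. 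Your proposal skips this construction entirely, and without it the appeal to stability does not go through. The remainder of your argument, including the boundary term $\zeta^{\eta}\cdot\nu=-A_{\partial\Omega}(\stackrel{\rightarrow}{n},\stackrel{\rightarrow}{n})|\xi|^2$ obtained from the Gauss formula and the tangency of $\eta$ to $\partial\Omega$, matches the paper.
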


The Poincar\'e inequality (\ref{PI1}) was later extended by Choksi and Sternberg \cite{CS} to the nonlocal area functional with a long-range interaction modeling diblock-copolymers. Theorem \ref{isolate3} can also be extended to this nonlocal setting. 
It is now worth commenting briefly on the method of the proof of (\ref{PI1}) in \cite{SZ2} (see also \cite{CS}) and our approach using the second inner variation. \\

The idea in \cite{SZ2} is to apply the stability inequality for an admissible family $\{E_t\}$ of deformations of $E_0$ using the diffeomorphism $\Psi_t$ generated by the vector 
field $\eta$ satisfying the assumptions of Theorem \ref{isolate3} (ii), that is, $E_t=\Psi_t (E_0)$ where $\Psi_t$ is the solution to
\begin{equation}\frac{\partial \Psi_t(x)}{\partial t} =\eta (\Psi_t(x)),~ \Psi_0(x) =x~\text{for all~}x\in \RR^N.
\label{SZpsi}
\end{equation}
The volume of $\Psi_t(E_0)$ is preserved up to first order but, in general, not up to second order in $t$. Thus a second order modification is needed. 

In our approach using second inner variation, we can produce domain admissible family $\{\tilde E_t\}$ and admissible family $\{E_t\}$ at the same time. They are the same if $\eta$ and $\zeta$ are compactly supported in $\Omega$. Moreover, the second order modification is already {\it built in} the acceleration vector $\zeta$. Any modification, if necessary, corresponds to a choice of $\zeta$. In the problem at hand with a volume constraint (\ref{volc}), what is needed is that the following identity
$$div \zeta + (div\eta)^2 - trace((\nabla\eta)^2)=0.$$
It is a remarkable, yet very simple, fact that the difference of the last two nonlinear terms in the above equation is a divergence of a vector field. In fact, we have
\begin{equation}(div\eta)^2 - trace((\nabla\eta)^2)= div \{(div\eta)\eta- (\eta\cdot\nabla)\eta\}.
\label{GoodIden}
\end{equation}
This is why we choose $\zeta$ to be $\zeta^{\eta}$ in the Theorem \ref{isolate3}. This explicit expression for $\zeta$ is the key in the proof of Parts (ii)-(iii) of Theorem \ref{isolate3}. 
\begin{remark}
The identity (\ref{GoodIden}) was used crucially by Lin \cite{Lin} in his elegant proof of the minimality property of the harmonic map $\frac{x}{|x|}: B^n \rightarrow S^{n-1}$ ($n\geq 3$) among all maps $\varphi: B^n\rightarrow S^{n-1}$ with $\varphi =x$ on $S^{n-1}$. His proof consists of proving that
\begin{equation*}
|\nabla \varphi|^2 \geq \frac{1}{n-2} \left((div\varphi)^2 - trace((\nabla\varphi)^2 \right)~\text{for}~\varphi: B^n\rightarrow S^{n-1}
\end{equation*}
and then integrating the right hand side using (\ref{GoodIden}).
\end{remark}

\begin{remark}

For $u_{\e} + t\varphi$ to be a variation of $u_{\e}$ for the purpose of calculating the second variation $d^2 E_{\e}(u_{\e},\varphi)$ under
the volume constraint (\ref{volc}), $\varphi$ must satisfy $\int_{\Omega}\varphi~ dx=0$. In general, $\int_{\Omega} \nabla u_{\e}\cdot\eta~ dx \neq 0$ for $\eta$ in part (iii).  Therefore, we must need $C^2$ perturbations $\eta^{\e}$ of $\eta$ so that $\int_{\Omega} \nabla u_{\e}\cdot\eta^{\e}~ dx =0$ in order to calculate $d^2 E_{\e}(u_{\e},-\nabla u_{\e}\cdot\eta^{\e})$. Here is a simple way to construct $\eta^{\e}$ (see also \cite[Lemma 8.1]{Le_SIMA}). By the divergence theorem, it suffices to have
\begin{equation}\int_{\Omega} u_{\e} div \eta^{\e}~ dx=0.
\label{FVe}
\end{equation}
Choose any smooth vector field $\varphi\in (C_{c}^{2}(\Omega))^{N}$ satisfying
$\int_{\Gamma}\varphi\cdot \stackrel{\rightarrow}{n}\neq 0.$
Let
$$h(\e):=\frac{-\int_{\Omega} u_{\e}div \eta~dx}{\int_{\Omega} u_{\e}div \varphi~dx}~\text{and}~\eta^{\e}= \eta (x) + h(\e) \varphi(x).$$
Then, (\ref{FVe}) is satisfied and as $\e\rightarrow 0,$ we have
$$h(\e)\rightarrow \frac{-2\int_{E_0}div\eta~dx}{2\int_{E_0} div\varphi~dx}= \frac{-2\int_{\Gamma}\eta\cdot \stackrel{\rightarrow}{n}d\mathcal{H}^{N-1}}{2\int_{\Gamma}\varphi\cdot \stackrel{\rightarrow}{n}d\mathcal{H}^{N-1}}=0.$$
\label{eremark}
\end{remark}

\subsection{The second inner variations of Ginzburg-Landau energies }
\label{sec-GL}
Let $\Omega$ be an open smooth bounded set in $\RR^{N}$ ($N\geq 3$).
Consider the Ginzburg-Landau equation for $0<\varepsilon<1$
\begin{equation}
- \Delta u_{\varepsilon} =\frac{1}{\varepsilon^2} u_{\varepsilon} (1-\abs{u_{\varepsilon}}^2)~\text{in} ~\Omega,~
 u_{\varepsilon}  = g_{\varepsilon}~\text{on}~\partial\Omega.
\label{GL}
\end{equation}
Here $u_{\e}: \Omega\rightarrow C$ and $g_{\e}: \partial\Omega\rightarrow C$ are complex-valued functions.
A solution $u_{\e}$ of (\ref{GL}) is a critical point of the simplified Ginzburg-Landau energy in superconductivity which is a complex analog of (\ref{MM}):
\begin{equation*}
 E_{\varepsilon}(u) = \frac{1}{ \abs{log\e}}\int_{\Omega} \left(\frac{1}{2}\abs{\nabla u}^2 + \frac{1}{4\varepsilon^2} (1-\abs{u}^2)^2 \right)dx
\equiv \int_{\Omega} \frac{e_{\e} (u)}{|log\e|}dx.
\end{equation*}
We assume that the energy of $u_{\e}$ satisfies
\begin{equation}
 E_{\varepsilon}(u_{\varepsilon}) \leq K.
\label{energybound}
\end{equation}
The existence of solutions of (\ref{GL}) satisfying (\ref{energybound}) can be proved for very general $g_{\e}$ allowing singularities of dimension $N-3$ on $\partial\Omega$ (see \cite[Condition (H2)]{BBO}).
With (\ref{energybound}), we have
\begin{equation*}
 e_{\e}(u_{\e})/|log\e| \rightharpoonup \mu_{\ast}~\text{ in the sense of Radon measures}~ \text{where}~ \mu_{\ast}~ \text{is a bounded measure on}~ \overline{\Omega}.  
\end{equation*}
Properties of $\mu_{\ast}$ can be found in \cite[Theorem 1]{BBO}: $\mu_{\ast}$ is a stationary varifold;  the support $\Gamma$of $\mu_{\ast}$ is a closed subset of $\overline{\Omega}$ and $\mathcal{H}^{N-2}$-rectifiable with $E(\Gamma):=\mathcal{H}^{N-2}(\Gamma)<\infty$. $\Gamma$ is often refereed to as the limiting filament since it is the limit of zero set of $u_{\e}$. 

In this paper,  we consider a model case where we assume that $\Gamma$ is smooth and connected. Thus $\Gamma$ is a minimal submanifold.  An interesting question is then: 
\begin{center}\it If $u_{\e}$
are stable solutions to (\ref{GL}), is $\Gamma$ a stable submanifold?
\end{center} This question was answered in the affirmative by Serfaty in the two dimensional case \cite{Serfaty}. Here, we address the above question in the higher dimensional case via the second inner variation as in the case of Allen-Cahn functional \cite{Le}.
The main task now is to calculate the second inner variation of $E_{\e}$ and then pass to the limit. From the discussion following Theorem \ref{thm-ACp},  we do not expect to get a similar ``discrepancy formula''
as in Theorem \ref{thm-ACp}. An alternative formula is given in the following.
\begin{theorem} With the above assumptions, we can find a positive constant $m$  such that
$\lim_{\varepsilon\rightarrow 0}E_{\varepsilon}(u_{\varepsilon}) =
m\pi E(\Gamma)\equiv m\pi \mathcal{H}^{N-2}(\Gamma)$ while for all smooth vector fields $\eta,\zeta\in (C_{c}^{1}(\Omega))^{N}$, we have
\begin{equation}\lim_{\varepsilon\rightarrow 0}\delta^{2} E_{\varepsilon}(u_{\varepsilon},\eta,\zeta) =
m\pi\delta^{2}E(\Gamma,\eta,\zeta) + m\pi\int_{\Gamma} \left(\abs{D_{\perp} (\eta^{\perp})}^2 - 2Jac_{\perp} (\eta^{\perp})\right)d\mathcal{H}^{N-2}.\label{discrep0}\end{equation} 
Here $
 \eta^{\perp} =\eta -\sum_{i=1}^{N-2}(\eta,\tau_{i})\tau_{i}$
and $D_{\perp}$ and $Jac_{\perp}$ are the derivative and Jacobian taken in the orthogonal plane to $\Gamma$. 

As a consequence of the above formula, stability and instability of Ginzburg-Landau in higher dimensions also pass to the limit provided that the limiting vortex filament is smooth and connected.
 
\label{mainSV}
\end{theorem}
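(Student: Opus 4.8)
The plan is to mirror the strategy of Theorem \ref{thm-ACp}: derive an explicit formula for $\delta^{2} E_\e(u_\e,\eta,\zeta)$, pass to the limit using the convergence of the energy--momentum tensors, and then identify the resulting discrepancy together with its sign. First I would compute $\delta^{2} E_\e(u_\e,\eta,\zeta)=\frac{d^2}{dt^2}\big|_{t=0}E_\e(u_\e\circ\Phi_t^{-1})$ directly. After the change of variables $y=\Phi_t(x)$ with $\Phi_t$ as in (\ref{map-deform}) and writing $A_t:=D\Phi_t=I+t\nabla\eta+\frac{t^2}{2}\nabla\zeta$, the Dirichlet part becomes $\frac{1}{2|log\e|}\int_{\Omega}\langle A_t^{-1}(A_t^{-1})^{T},T_\e\rangle\det A_t\,dx$, where $T_\e:=\mathrm{Re}(\nabla u_\e\otimes\overline{\nabla u_\e})$ is the (real) $2$-tensor, while the potential picks up only the Jacobian factor $\det A_t$. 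Expanding $A_t^{-1}(A_t^{-1})^{T}$ and $\det A_t$ to order $t^2$ yields a formula for $\delta^{2} E_\e$ that is linear in $T_\e$ and in the potential, with universal coefficients quadratic in $\nabla\eta$ and linear in $\nabla\zeta$. The essential point---in sharp contrast with the $p$-Laplace case---is that, the Ginzburg--Landau energy being exactly quadratic in the gradient, no genuine $4$-tensor of $\nabla u_\e$ survives: every gradient term is a contraction of the single $2$-tensor $T_\e$.

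Next I would pass to the limit using the structure theorems of Bethuel--Brezis--Orlandi \cite{BBO}, which furnish a constant $m>0$ with $e_\e(u_\e)/|log\e|\rightharpoonup m\pi\,\mathcal{H}^{N-2}\lfloor\Gamma$ (giving the stated energy limit) together with the Reshetnyak-type convergence (\ref{resGL}),
\begin{equation*}
\tfrac{1}{|log\e|}\,T_\e\;\rightharpoonup\;m\pi\,\pi_{\perp}\,\mathcal{H}^{N-2}\lfloor\Gamma,\qquad \pi_\perp:=\textstyle\sum_{a=1}^{2}e_a\otimes e_a,
\end{equation*}
where $e_1,e_2$ span the $2$-dimensional normal space of $\Gamma$. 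Since each gradient term of $\delta^{2} E_\e$ is a contraction of $T_\e$ against a continuous $(\eta,\zeta)$-field, and the potential term is controlled by the energy, I may pass to the limit term by term. The crucial feature is that the limit is governed by the \emph{rank-two} projection $\pi_\perp$, whereas in the codimension-one setting the limiting tensor $c_p\,\stackrel{\rightarrow}{n}\otimes\stackrel{\rightarrow}{n}$ of (\ref{2nu0}) is rank one; this is exactly why the absence of the $4$-tensor convergence (\ref{4nu0}) produces a discrepancy of a different nature.

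I would then sort the limiting integrand into the part reconstructing $m\pi\,\delta^{2}E(\Gamma,\eta,\zeta)$ through (\ref{SVE}) (now with a tangent frame $\tau_1,\dots,\tau_{N-2}$ and normal frame $e_1,e_2$ of the minimal submanifold $\Gamma$) and a purely normal remainder. The tangential contractions of $\pi_\perp$ assemble precisely into the tangential-divergence and shape-operator terms of (\ref{SVE}), while the normal--normal derivatives of $\eta$ leave over $m\pi\int_{\Gamma}\big(\abs{D_\perp\eta^\perp}^2-2\,Jac_\perp\eta^\perp\big)\,d\mathcal{H}^{N-2}$. The Jacobian appears from the normal-plane restriction of (\ref{GoodIden}): for a field $v$ on a $2$-plane, $(\text{div}\,v)^2-\mathrm{tr}((\nabla v)^2)=2\det(\nabla v)=2\,Jac(v)$; and the $\bar\partial$-operator enters through the algebraic identity $\abs{D_\perp v}^2-2\,Jac_\perp v=4\abs{\bar\partial v}^2\ge0$, with $v=\eta^\perp$ read in complex normal coordinates. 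This manifest nonnegativity is the codimension-two counterpart of the positive defect measure of Theorem \ref{thm-ACp}.

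For the stability transfer I would invoke Proposition \ref{all_var}: as $u_\e$ solves (\ref{GL}) its first variation vanishes, so $\delta^{2} E_\e(u_\e,\eta,\zeta)=d^{2}E_\e(u_\e,-\nabla u_\e\cdot\eta)$ and stability of $u_\e$ forces $\lim_{\e\to0}\delta^{2}E_\e\ge0$; by (\ref{discrep0}) this reads $m\pi\,\delta^{2}E(\Gamma,\eta,\zeta)\ge-4m\pi\int_{\Gamma}\abs{\bar\partial\eta^\perp}^2$. Testing with fields for which $\bar\partial(\eta^\perp)=0$ on $\Gamma$---always achievable by prescribing the free normal derivatives of $\eta^\perp$ in the normal $2$-plane while matching any given normal variation of the connected $\Gamma$---annihilates the remainder and yields $\delta^{2}E(\Gamma,\eta,\zeta)\ge0$, so $\Gamma$ is stable; running the same choice backwards transfers instability. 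I expect the main obstacle to be this sorting step of Step three: matching the rank-two contractions of $\pi_\perp$ against the frame terms of (\ref{SVE}) and verifying that the tangential pieces reassemble \emph{exactly} into the second inner variation of the $(N-2)$-area of the minimal $\Gamma$---using its stationarity to discard the first-order terms---is the delicate computation on which the whole identity (\ref{discrep0}) rests.
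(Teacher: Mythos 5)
Your proposal follows essentially the same route as the paper: the explicit second-inner-variation formula (\ref{SVformula})/(\ref{svep}), the observation that every gradient term is a contraction of the single $2$-tensor $\frac{1}{|log\e|}\nabla u_\e\otimes\nabla u_\e$, its Reshetnyak-type convergence to the rank-two normal projection, the local-coordinate sorting into $m\pi\,\delta^{2}E(\Gamma,\eta,\zeta)$ plus the $\bar\partial$-remainder, and the stability transfer via Proposition \ref{all_var} together with holomorphic normal extensions.

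Two small corrections. First, the claim that \cite{BBO} ``furnishes a constant $m>0$'' is not quite right: \cite{BBO} only gives that $\mu_{\ast}$ is a stationary rectifiable varifold supported on $\Gamma$, so a priori its density may vary along $\Gamma$; to get the single constant $m$ in the statement (and hence the clean weight $m\pi$ in (\ref{discrep0})) you must invoke the Constancy Theorem \cite[Theorem 41.1]{Simon} for the stationary varifold $\mu_\ast$ on the smooth \emph{connected} $\Gamma$ --- this is exactly where the connectedness hypothesis enters. Second, your remark about ``using stationarity of $\Gamma$ to discard the first-order terms'' in the sorting step is unnecessary (and slightly misleading): the matching of the limit of (\ref{svep}) with the frame expression (\ref{sve}) for $\delta^{2}E(\Gamma,\eta,\zeta)$, including the $\text{div}^{\Gamma}\zeta$ term via $\text{div}^{\Gamma}\zeta=\text{div}\,\zeta-(\stackrel{\rightarrow}{n}^{C},\stackrel{\rightarrow}{n}^{C}\cdot\nabla\zeta)$, is a purely algebraic identity and uses no minimality of $\Gamma$. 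With these two points repaired, the argument is the paper's.
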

In the above theorem, we denote $\{\tau_{1}(x),\cdots,\tau_{N-2}(x)\}$ any orthonormal basis for the tangent space $T_{x}(\Gamma)$ for each $x\in\Gamma$.  

It is interesting to note that the Jacobian determinant appears in the above formula which is very natural in the Ginzburg-Landau setting. We can also write the discrepancy 
term using the $\bar{\partial}$-operator as follows. Suppose that the tangent space $T_{x}\Gamma$ is spanned by the standard unit vectors $\{e_{1}, \cdots, e_{N-2}\}$. We complexify the normal space $(T_x\Gamma)^{\perp}$using the complex variable $z_{\Gamma}= x_{N-1} + i x_N$. Then, we complexify the components of $\eta^{\perp}= (0, \cdots, 0, \eta^{N-1},\eta^{N})$ into a complex function $(\eta^{\perp})^{C}= \eta^{N-1} + i \eta^{N}$. Denote by $\overline{z}$ the complex conjugate of $z$. Then, we recall that for complex-valued $f$ defined on $(T_x\Gamma)^{\perp}$, we have
$$\frac{\partial f}{\partial \overline{z_{\Gamma}}}= \frac{1}{2}(\frac{\partial f}{\partial x_{N-1}} + i\frac{\partial f}{\partial x_{N}} ).$$
Now, a little computation shows that (see the end of the proof of Theorem \ref{mainSV})
$$\abs{D_{\perp} (\eta^{\perp})}^2 - 2Jac_{\perp} (\eta^{\perp}) =(\frac{\partial\eta^{N-1}}{\partial x_{N}}
+\frac{\partial\eta^{N}}{\partial x_{N-1}})^2 + (\frac{\partial\eta^{N-1}}{\partial x_{N-1}}
-\frac{\partial\eta^{N}}{\partial x_{N}})^2  = 4|\frac{\partial (\eta^{\perp})^{C}}{\partial \overline{z_{\Gamma}}}|^2.$$
Therefore, (\ref{discrep0}) becomes
\begin{equation}\lim_{\varepsilon\rightarrow 0}\delta^{2} E_{\varepsilon}(u_{\varepsilon},\eta,\zeta) =m\pi
\delta^{2}E(\Gamma,\eta,\zeta) + 4m\pi\int_{\Gamma}|\frac{\partial (\eta^{\perp})^{C}}{\partial \overline{z_{\Gamma}}}|^2 d\mathcal{H}^{N-2}.\label{discrep2}\end{equation}
\begin{remark}
With this expression, we discover that, for a vector field $\eta$ defined initially on $\Gamma$,   its holomorphic extension into the orthogonal plane of $\Gamma$ will make the discrepancy term vanish. 
\end{remark}

\begin{remark}
In \cite{MSZ}, Montero-Sternberg-Ziemer considered certain bounded, open, Lipschitz domain
$\Omega\subset \RR^3$ containing a collection of line segments $l_1, \cdots, l_N$ with some specific properties. 
Let
$\Gamma=\bigcup_{j=1}^{N} l_j.$
Then,  the authors constructed in \cite[Proposition 3.1 and Theorem 4.2]{MSZ} local minimizers $u_{\e}\in W^{1,2}(\Omega; C)$ in $W^{1,2}(\Omega; C)$ of $E_{\e}$ such that
$$\lim_{\e\rightarrow 0} E_{\e}(u_{\e})=\pi \mathcal{H}^{1}(\Gamma)~
\text{and}~
e_{\e}(u_{\e})/|log\e|\rightharpoonup \pi\mathcal{H}^{1}\lfloor \Gamma.$$
For these $u_{\e}$, we can use (\ref{SV_eq}) and Theorem \ref{mainSV} with $m=1$ to obtain as in Theorem \ref{isolate2}
$$\lim_{\varepsilon\rightarrow 0}d^{2} E_{\varepsilon}(u_{\varepsilon}, -\nabla u_\e\cdot\eta) =\pi\delta^{2}E(\Gamma,\eta,\zeta) + 4\pi\int_{\Gamma}|\frac{\partial (\eta^{\perp})^{C}}{\partial \overline{z_{\Gamma}}}|^2 d\mathcal{H}^{1}.$$
This is the relation between the second variation of $E_\e$ and that of $E$.
\end{remark}
\subsection{Further questions}
We list here some questions for further investigation.

\begin{myindentpar}{1cm}
{\bf 1. Ginzburg-Landau energies and codimension two area functional.}  Can we prove similar results as in Theorems \ref{thm-ACp} and \ref{isolate2} for Ginzburg-Landau energies? \\
{\bf 2. The higher dimensional area functional.} Essentially, we do not know any formula like those in Theorem \ref{thm-ACp} for the higher dimensional area functional and its variational approximation (see \cite{ABO}). This question is almost unexplored.  
\end{myindentpar}
The paper is organized as follows. In Section \ref{sec-Rel}, we establish a relationship between two notions of variations. We use this relationship to prove Theorem \ref{isolate2} assuming Theorem \ref{thm-ACp} and then Theorem \ref{isolate3}. We prove Theorem \ref{thm-ACp} in Section \ref{sec-ACp}. The proof of Theorem \ref{mainSV} will be given in Section \ref{sec-GLSV}. 

\section{A relation between two notions of variation and application to local minimizers}
\label{sec-Rel}
In this section, we prove Theorem \ref{isolate2} assuming Theorem \ref{thm-ACp}, Corollary \ref{eigen_cor} and then Theorem \ref{isolate3}. To do these, we use a relationship between two notions of variation stated in the following.
\begin{propo}
Up to second order, the inner variations of the functional $A$, defined in the Introduction, at $u$ with respect to smooth, compactly supported vector fields $(\eta, \zeta)$ are equal to the variations of $A$ at $u\in C^2(\Omega)$ with respect to $-\nabla u\cdot \eta$. More precisely, we have
\begin{equation}
\label{FV_eq} \delta A (u, \eta, \zeta)= dA (u, -\nabla u\cdot\eta)
\end{equation}
and  
\begin{equation}
\label{SV_eq} \delta^2 A (u, \eta, \zeta)= d^2A (u, -\nabla u\cdot\eta) + dA(u, X_0)
\end{equation}
where 
\begin{equation}X_0= (D^2 u(y) \cdot \eta(y), \eta (y)) + (\nabla u(y), 2\nabla \eta (y) \eta (y)-\zeta(y)).
\label{Xzero}
\end{equation}
\label{all_var}
\end{propo}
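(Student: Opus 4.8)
The plan is to reduce both inner variations to ordinary variations by Taylor expanding the deformed function $u_t := u\circ\Phi_t^{-1}$ in powers of $t$ and reading off its first and second $t$-derivatives at $t=0$. Once I know that
\[
u_t = u + t\,\dot u_0 + \tfrac{t^2}{2}\,\ddot u_0 + O(t^3)
\]
in a topology strong enough to differentiate $A$ under the integral sign, the two identities (\ref{FV_eq})--(\ref{SV_eq}) will follow from the chain rule applied to the smooth functional $A$.

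First I would invert the map $\Phi_t$ from (\ref{map-deform}) to second order. Writing $\Phi_t^{-1}(y) = y + t\,a(y) + \tfrac{t^2}{2}\,b(y) + O(t^3)$ and substituting into the identity $y = \Phi_t(\Phi_t^{-1}(y))$, matching the coefficients of $t$ and $t^2$ gives $a = -\eta$ and $b = 2\,\nabla\eta\cdot\eta - \zeta$; that is,
\[
\Phi_t^{-1}(y) = y - t\,\eta(y) + \tfrac{t^2}{2}\bigl(2\,\nabla\eta(y)\,\eta(y) - \zeta(y)\bigr) + O(t^3).
\]
Next, Taylor expanding $u$ about $y$ along the displacement $w := \Phi_t^{-1}(y) - y$ and using $u\in C^2$, I collect the coefficient of $t$ as $\dot u_0 = -\nabla u\cdot\eta$, and, keeping the Hessian contribution $\tfrac12 (D^2 u\, w, w) = \tfrac{t^2}{2}(D^2 u\,\eta,\eta)+O(t^3)$ together with the $t^2$ part of $\nabla u\cdot w$, the coefficient of $\tfrac{t^2}{2}$ as
\[
\ddot u_0 = (D^2 u(y)\,\eta(y), \eta(y)) + (\nabla u(y),\, 2\,\nabla\eta(y)\,\eta(y) - \zeta(y)) = X_0,
\]
which is exactly the quantity in (\ref{Xzero}).

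Finally I would set $g(t) := A(u_t) = \int_\Omega F(u_t, \nabla u_t)\,dx$ and differentiate under the integral sign. Since $\tfrac{d}{dt}\big|_{t=0} u_t = \dot u_0$, the chain rule gives $g'(0) = dA(u, \dot u_0) = dA(u, -\nabla u\cdot\eta)$, which is (\ref{FV_eq}). Differentiating once more and separating the terms quadratic in $\dot u_t$ from those linear in $\ddot u_t$ yields
\[
g''(0) = d^2 A(u, \dot u_0) + dA(u, \ddot u_0) = d^2 A(u, -\nabla u\cdot\eta) + dA(u, X_0),
\]
which is (\ref{SV_eq}); here the first group reassembles the expression $\int_\Omega[F_{zz}\dot u_0^2 + 2F_{zp_i}\dot u_0\,\partial_i\dot u_0 + F_{p_ip_j}\partial_i\dot u_0\,\partial_j\dot u_0]$ defining $d^2A(u,\dot u_0)$, while the second group is $\int_\Omega[F_z\ddot u_0 + F_{p_i}\partial_i\ddot u_0] = dA(u,\ddot u_0)$.

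The main obstacle I anticipate is bookkeeping rather than conceptual: carrying the $O(t^3)$ remainders through both the inversion of $\Phi_t$ and the Hessian term of $u$, and verifying that the expansion of $u_t$ holds in, say, $C^1(\Omega)$ uniformly on the support of $\eta,\zeta$, so that the interchange of $\tfrac{d^2}{dt^2}$ with $\int_\Omega$ is legitimate. Because $\eta,\zeta$ are compactly supported and $u\in C^2$, these regularity checks are routine but must be carried out to ensure that $\Phi_t$ is indeed a diffeomorphism of $\Omega$ for small $t$ and that no lower-order terms are dropped.
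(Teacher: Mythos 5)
Your proposal is correct and follows essentially the same route as the paper: you derive the second-order expansion $u\circ\Phi_t^{-1}=u-t\,\nabla u\cdot\eta+\tfrac{t^2}{2}X_0+O(t^3)$ by inverting $\Phi_t$ and Taylor-expanding $u$, and then obtain (\ref{SV_eq}) by differentiating $t\mapsto A(u\circ\Phi_t^{-1})$ twice along this path, which is exactly the paper's computation with the auxiliary function $\mathcal{A}_1(t)$. The only (immaterial) difference is that for (\ref{FV_eq}) the paper passes through the change of variables $y=\Phi_t(x)$ and an integration by parts, whereas you apply the chain rule directly to the expansion; both yield $dA(u,-\nabla u\cdot\eta)$.
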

\begin{remark} \begin{myindentpar}{1cm}
(1) The identity (\ref{FV_eq})
is the main reason why we should multiply $\nabla u\cdot\eta$ to the Euler-Lagrange equation/chemical potential in phase transitions in order to obtain Gibbs-Thomson law/monotonicity formula. The idea is to go from the first variations to the first inner variations where we can pass to the limit (to obtain the corresponding first inner variations of the area functional). The most relevant works related to the subject of this paper are those of Luckhaus-Modica \cite{LM} and Tonegawa \cite{Tone2002}. \\
(2) For critical points of Allen-Cahn type energies such as those of (\ref{MM}) and (\ref{GL}), formula (\ref{SV_eq}) is already known in the literature \cite{Le, Serfaty}; its proof can be seen by direct calculations using the Euler-Lagrange equation. Our formula (\ref{SV_eq}) generalizes the above mentioned formula in \cite{Le, Serfaty}. It holds for general $u$, not necessarily critical points of $A$,  and of independent interest. It is especially relevant when the first variation of $A$ does not vanish as in the case of critical points with constraints in Theorem \ref{isolate3}.
\end{myindentpar}
\end{remark}

Now, we are ready to prove Theorem \ref{isolate2}.
\begin{proof}[Proof of Theorem \ref{isolate2}] If $\Gamma$ is an isolated $L^1$- local minimizer of the area functional $E$ then so is for $E_p.$ 
The construction of $u_{\e, p}$ and the proof of Theorem \ref{isolate1} in \cite{KS} give a sequence of $L^1$-local minimizers $u_{\e, p}$ of $E_{\e, p}$ such that
$$\|u_{\e, p}-u_0\|_{L^1(\Omega)}\rightarrow 0~ \text{as} ~\e\rightarrow 0~\text{and}~\lim_{\e\rightarrow 0} E_{\e, p}(u_{\e, p}) = E_{p}(\Gamma).$$
For completeness, we sketch the proof. Since $u_0$ is isolated, we can choose $\delta>0$ such that
\begin{equation}E_p(u_0)< E_p(u)~\text{whenever}~0<\|u-u_0\|_{L^1(\Omega)}\leq\delta.
\label{isodef}
\end{equation}
Let $u_{\e, p}$ be any minimizer of $E_{\e, p}$ on the ball
$$B=\{u: \|u-u_0\|_{L^1(\Omega)}\leq \delta\}.$$
The existence of such a $u_{\e, p}$ is guaranteed by the direct method of the calculus of variations. Since $E_{\e, p}$ Gamma-converges to $E_p$, there is a sequence $\{w_{\e_i, p}\}$ with $w_{\e_i, p}\rightarrow u_0$ in $L^{1}(\Omega)$ and $E_{\e_i, p}(w_{\e_i, p})\rightarrow E_{p}(u_0)$. When $\e_i$ is small, $w_{\e_i, p}$ lies in $B$. It follows that
$$\liminf E_{\e, p} (u_{\e, p})\leq E_p(u_0).$$
By using the isolated nature of $u_0$, we can show that for all sufficiently small $\e$, $u_{\e, p}$ lies in the interior of $B$. This shows that $u_{\e, p}$ is an $L^1$-local minimizer of $E_{\e, p}$. The same argument shows that $u_{\e, p}$ converges to $u_0$ in $L^1(\Omega)$. By the liminf inequality in Gamma-convergence, we find
$$\liminf E_{\e, p} (u_{\e, p})\geq E_{p}(u_0).$$
Hence,  $\lim_{\e\rightarrow 0} E_{\e, p}(u_{\e, p}) = E_{p}(\Gamma).$
Thus, by Theorem \ref{thm-ACp}, we have
\begin{equation*}
\lim_{\varepsilon\rightarrow 0}\delta^{2} E_{\varepsilon, p}(u_{\varepsilon, p},\eta,\zeta) = c_p\left(
 \delta^{2}E(\Gamma,\eta,\zeta) + (p-1)\int_{\Gamma} (\stackrel{\rightarrow}{n},\stackrel{\rightarrow}{n}\cdot\nabla\eta)^2 d\mathcal{H}^{N-1}\right).\end{equation*} 
The result now follows by combining the above equation with (\ref{SV_eq}) in Proposition \ref{all_var}.
\end{proof}
\begin{proof}[Proof of Corollary \ref{eigen_cor}]

Let denote by $Q_{\e}$ the quadratic function associated to the operator $-\e \Delta + 2\e^{-1}(3u_{\e}^2-1)$, that is, for $\varphi\in C^{1}_{c}(\Omega)$, we have
$$Q_{\e}(u)(\varphi)=\int_{\Omega} \left(\e |\nabla \varphi|^2 + 2\e^{-1} (3u_\e^2-1)\varphi^2\right) dx\equiv d^2 E_{\e}(u_{\e}, \varphi).$$
Similarly, we can define $Q$ for $E$. In particular, for $\varphi\in C^{1}_{c}(\Gamma)$, we have
$$Q(\varphi)= \int_{\Gamma} \left(\abs{\nabla^{\Gamma} \varphi}^2 - \abs{A}^2 \varphi^2\right) d\mathcal{H}^{N-1}.$$
We can naturally extend $Q$ to be defined for compactly supported vector fields in $\Omega$ that are generated by functions defined on $\Gamma$ as follows. Given $f\in C^{1}_{c}(\Gamma)$, let $\eta = f \stackrel{\rightarrow}{n}$ be a normal vector field defined on $\Gamma$. 
Assuming the smoothness of $\Gamma$, we can find an extension $\tilde{\eta}$ of $\eta$ to $\Omega$ such that $(\stackrel{\rightarrow}{n}, 
\stackrel{\rightarrow}{n}\cdot\nabla\tilde{\eta}) =0$. Then, define $Q(\tilde{\eta}):= Q(f).$

For any vector field $V$ defined on $\Gamma$ and is normal to $\Gamma$, we also denote by $V$ its extension to $\Omega$
in such a way that $(\stackrel{\rightarrow}{n}, \stackrel{\rightarrow}{n}\cdot \nabla V) =0.$ As a consequence, (\ref{discrep}) becomes
\begin{equation}\lim_{\e\rightarrow 0} Q_{\e}(\nabla u_{\e}\cdot V) = c_2 Q(V).
\label{polarident}
\end{equation}
By the definition of $\lambda_k$, we can find $k$ linearly independent, orthonormal vector fields $V^{1} = v^{1}\stackrel{\rightarrow}{n},\cdots, V^{k}= v^{k}\stackrel{\rightarrow}{n}$ which are defined on $\Gamma$ and normal to $\Gamma$ such that
\begin{equation}\label{V_ortho} 
\int_{\Gamma} v^i v^j d\mathcal{H}^{N-1}=\delta_{ij}~\text{and}~
\max_{\sum_{i=1}^{k} a^2_{i}=1} Q(\sum_{i=1}^{k} a_{i} V^{i})\leq \lambda_k. 
\end{equation}
Denote $$V^{i}_{\e}  = \left.\frac{d}{dt}\right\rvert_{t=0} u_{\e} \left(\left(x+ tV^{i}(x)\right)^{-1}\right)=-\nabla u_{\e}\cdot V^i.$$ 
As in \cite{Le}, the map $V\longmapsto -\nabla u_{\e}\cdot V$ is linear and one-to-one for $\e$ small.  Thus, the linear independence of $V^{i}$ implies that of $V^{i}_{\e}$ for $\e$ small. Therefore, the $V^{i}_{\e}$ span a space of dimension $k$. It follows from the variational characterization of $\lambda_{\e, k}$ that
\begin{equation}\displaystyle
\sup_{\sum_{i=1}^{k} a^2_{i}=1} \frac{Q_{\e}(\sum_{i=1}^k a_i V^{i}_{\e})}{\e\int_{\Omega} |\sum_{i=1}^k a_i V^{i}_{\e}|^2}\geq \frac{\lambda_{\e, k}}{\e}.
\end{equation}
Take any sequence $\e\rightarrow 0$ such that
$$\frac{\lambda_{\e, k}}{\e}\rightarrow \limsup_{\e\rightarrow 0}\frac{\lambda_{\e, k}}{\e}:= \gamma_k.$$ Then, for any $\delta>0$, we can find $a_1, \cdots, a_k$ with $\sum_{i=1}^k a_i^2=1$ such that for $\e$ small enough
\begin{equation} \frac{Q_{\e}(\sum_{i=1}^k a_i V^{i}_{\e})}{\e\int_{\Omega} |\sum_{i=1}^k a_i V^{i}_{\e}|^2} \geq \gamma_k -\delta.
\label{Qeq1}
\end{equation}
By polarizing (\ref{polarident}) as in \cite{Le}, we have for all $a_{i}$
\begin{equation}
\lim_{\e\rightarrow 0}  Q_{\e} (\sum_{i=1}^{k} a_{i} V_{\e}^{i}) =  c_2 Q (\sum_{i=1}^{k} a_{i} V^{i}) 
\label{Qeq2}
\end{equation}
and the convergence is uniform with respect to $\{a_{i}\}$ such that $\sum_{i=1}^{k} a^2_{i} =1$. Next, we study the convergence of the denominator of the left hand side of (\ref{Qeq1}) when $\e\rightarrow 0$.
By (\ref{2nu0}), we have
\begin{equation}\lim_{\e\rightarrow 0} \e\int_{\Omega} |\sum_{i=1}^k a_i V^{i}_{\e}|^2dx = \lim_{\e\rightarrow 0} \e\int_{\Omega} \sum_{i, j=1}^k a_i a_j (\nabla u^{\e}\cdot V^i)(\nabla u^{\e}\cdot V^i)dx\nonumber = c_2\sum_{i,j=1}^k a_i a_j \int_{\Gamma} v^i v^j d\mathcal{H}^{N-1}=c_2,
\label{Qeq3}
\end{equation}
where we used the first equation in (\ref{V_ortho}) in the last equation.
Combining (\ref{Qeq1})-(\ref{Qeq3}) together with (\ref{V_ortho}), we find that
$$ \gamma_k -\delta\leq Q(\sum_{i=1}^{k} a_{i} V^{i})\leq \lambda_k.$$
Therefore, by the arbitrariness of $\delta$, we have
$\gamma_k\leq \lambda_k,$ proving the Corollary.
\end{proof}

\begin{proof}[Proof of Theorem \ref{isolate3}]
{\it Proof of part (i).} Note that, for $t$ small, $\Phi_t$ defined by (\ref{map-deform}) is a diffeomorphism of $\RR^N$ into itself. We compute 
$$|\tilde E_t|=\int_{\Phi_t(E_0)}dy = \int_{E_0} \abs{\text{det}\nabla\Phi_{t}(x)}dx. $$
We use the following identity for matrices $A$ and $B$
\begin{equation*}
 \text{det}(I + tA + \frac{t^{2}}{2}B) = 1 + t\text{trace}(A) + \frac{t^2}{2}[\text{trace}(B) + (\text{trace}(A))^2 - \text{trace}(A^2)] + O(t^3).
\end{equation*}
Therefore, since for $t$ sufficiently small, $\text{det}\nabla\Phi_{t}(x)>0$,
\begin{multline}
\abs{\text{det}\nabla\Phi_{t}(x)}=\text{det}\nabla\Phi_{t}(x) =\text{det} (I + t\nabla \eta (x) +\frac{t^2}{2}\nabla \zeta)\\= 1 +  t\text{div} \eta + \frac{t^2}{2}[ \text{div}\zeta + (\text{div}\eta)^2 - \text{trace}((\nabla\eta)^2)] + O(t^3).
\label{det_expand}
\end{multline}
It follow that, for small $t$, we have
$$|\tilde E_t| = \int_{E_0} \{1 +  t\text{div} \eta + \frac{t^2}{2}[ \text{div}\zeta + (\text{div}\eta)^2 - \text{trace}((\nabla\eta)^2)] + O(t^3)\} dx .$$
The domain admissibility of $\tilde E_t$ is equivalent to
$$\int_{E_0} \text{div} \eta~ dx =0,~\text{and}~\int_{E_0} [ \text{div}\zeta + (\text{div}\eta)^2 - \text{trace}((\nabla\eta)^2)]dx=0.$$
For any $\eta$, by (\ref{GoodIden}), we can choose
$\zeta = \zeta^{\eta}:= - (div\eta)\eta + (\eta\cdot\nabla)\eta$
so that the second equation holds. Thus, the admissibility of $\tilde E_t$ is reduced to the first equation. This is what we need to prove.  
Hence, one particular second inner variation of the area functional $E(\Gamma)$ with volume constraint (\ref{volc}) and velocity $\eta$ is
$\delta^2 E (\Gamma, \eta, \zeta^{\eta}).$
\\ 
{\it Proof of part (ii).} Let us now consider the special case where $E_0$ is stationary for the area functional $E$ with volume constraint (\ref{volc}), $\eta$ is a smooth vector field  tangent to $\partial\Omega$, normal to $\Gamma$ with $(\stackrel{\rightarrow}{n}, \stackrel{\rightarrow}{n}\cdot\nabla \eta)=0$ on $\Gamma$ and
$\displaystyle\int_{\Gamma} \eta(x)\cdot \stackrel{\rightarrow}{n}(x) d\mathcal{H}^{N-1}(x)=0.$ In this case, by the tangency of $\eta$ to $\partial\Omega$ and the divergence theorem, we have
$$\int_{E_0} div \eta dx=\int_{\Gamma} \eta \cdot \stackrel{\rightarrow}{n} d\mathcal{H}^{N-1}.$$
Thus $\tilde E_t$ is domain admissible and hence $\delta^2 E (\Gamma, \eta, \zeta^{\eta})$ makes sense. 

Applying the stationary condition to the admissible family $\{\Psi_t (E_0)\}$ as in \cite{SZ2} where $\Psi_t$ is defined by (\ref{SZpsi}), we find that the mean curvature $\kappa$ of $\Gamma$ is a constant and that $\partial\Gamma$ is orthogonal to $\partial\Omega$. On $\Gamma$, let $\xi= \eta\cdot \stackrel{\rightarrow}{n}$.
Now, we can compute
$$\int_{\Gamma} \sum_{i=1}^{N-1}\abs{(D_{\tau_{i}}\eta)^{\perp}}^2 d\mathcal{H}^{N-1}=\int_{\Gamma}|\nabla_{\Gamma}\xi|^2d\mathcal{H}^{N-1},$$
$$\int_{\Gamma} \sum_{i,j=1}^{N-1}(\tau_{i}\cdot D_{\tau_{j}}\eta)(\tau_{j}\cdot D_{\tau_{i}}\eta)d\mathcal{H}^{N-1}=\int_{\Gamma}|A_{\Gamma}|^2|\xi|^2 d\mathcal{H}^{N-1}.$$
Since $\eta = \xi\stackrel{\rightarrow}{n}$ on $\Gamma$, we find that
$$div^{\Gamma} \eta= D_{\tau_i}(\xi\stackrel{\rightarrow}{n})\cdot \tau_i= \xi(D_{\tau_i} \stackrel{\rightarrow}{n})\cdot \tau_i= \kappa \xi.$$
Using $(\stackrel{\rightarrow}{n}, \stackrel{\rightarrow}{n}\cdot\nabla \eta)=0$ on $\Gamma$, we find that $div \eta = div^{\Gamma}\eta =\kappa \xi.$ Hence, similarly as above, we obtain
$$div^{\Gamma}((div\eta) \eta)= \kappa (div\eta\eta\cdot  \stackrel{\rightarrow}{n})=\kappa^2|\xi|^2.$$ 
Thus
$$\int_{\Gamma} div^{\Gamma}\left((div\eta)\eta\right)d\mathcal{H}^{N-1}=\int_{\Gamma} \kappa^2|\xi|^2d\mathcal{H}^{N-1}.$$
Note that the vector field $(\eta\cdot\nabla)\eta$ corresponds to the vector field $Z$ in \cite{SZ2}. Computing as in \cite{SZ2} and using the orthogonality of $\partial\Gamma$ and $\partial\Omega$ which is due $E_0$ being stationary, we get
$$\int_{\Gamma} div^{\Gamma} ((\eta\cdot\nabla)\eta)d\mathcal{H}^{N-1}=-\int_{\partial\Gamma\cap\partial\Omega} A_{\partial\Omega}(\stackrel{\rightarrow}{n}, \stackrel{\rightarrow}{n})|\xi|^2 d\mathcal{H}^{N-2}.$$
Hence, with $\zeta^{\eta}= -(div \eta)\eta + (\eta\cdot\nabla)\eta$, we find
$$\int_{\Gamma} div^{\Gamma} \zeta^{\eta}d\mathcal{H}^{N-1}= -\int_{\Gamma} \kappa^2|\xi|^2d\mathcal{H}^{N-1}-\int_{\partial\Gamma\cap\partial\Omega} A_{\partial\Omega}(\stackrel{\rightarrow}{n}, \stackrel{\rightarrow}{n})|\xi|^2 d\mathcal{H}^{N-2}.$$
By (\ref{SVE}) and combining all the above identities, we finally obtain
$$\delta^2 E (\Gamma, \eta, \zeta^{\eta})=\int_{\Gamma}\{div^{\Gamma}\zeta^{\eta} + \kappa^2 |\xi|^2 + |\nabla_{\Gamma}\xi|^2 -|A_{\Gamma}|^2|\xi|^2  \}d\mathcal{H}^{N-1} = J(\xi)=J(\eta\cdot \stackrel{\rightarrow}{n}).$$

Suppose now that $E_0$ is stable for the area functional $E$ with volume constraint (\ref{volc}). Then, by \cite{SZ2}, we know that $J(\xi)\geq 0$. Here, we give another proof using inner variations. For the purpose of calculating the second variation of $E$ as done in \cite{SZ2}, we need an admissible family $E_t$ of deformations of $E_0$ that stay inside $\Omega$. It is natural to consider
$$E_t=\{y\in\Omega: u_t(y)=1\}=\Phi_t(E_0)\cap\Omega.$$
In view of the change of variables, (\ref{det_expand}) and $\zeta= \zeta^{\eta}$, we have
\begin{equation}|\Phi_t(\Omega)| = \int_{\Omega}\{1 + t div \eta + O(t^3)\}dx =|\Omega| + o(t^2).
\label{volt2}
\end{equation}
That the coefficient of $t$ vanishes can be seen from the divergence theorem and the tangency of $\eta$ to $\partial\Omega$. By the domain admissibility of $\tilde E_t$, we have 
$|\tilde E_t| = |E_0| + o(t^2).$
Hence, the admissibility of $E_t$ follows from the following claim.
\begin{claim}
$|\Omega\backslash \Phi_t(\Omega)| + |\Phi_t(\Omega)\backslash \Omega|= o(t^2). $
\label{vol_claim}
\end{claim}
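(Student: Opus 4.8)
The plan is to prove the much stronger estimate $|\Omega\setminus\Phi_t(\Omega)| + |\Phi_t(\Omega)\setminus\Omega| = O(t^3)$, which is certainly $o(t^2)$. The geometric content is that, although $\Phi_t$ from (\ref{map-deform}) need not map $\partial\Omega$ into itself, the velocity $\eta$ is tangent to $\partial\Omega$ and the acceleration is the specific field $\zeta=\zeta^{\eta}=-(\text{div}\eta)\eta+(\eta\cdot\nabla)\eta$ from (\ref{GoodIden}); I claim these two facts force every boundary point to be displaced normally only at order $t^3$. Assuming (as in the present setting) that $\partial\Omega$ is $C^2$, let $d$ be the signed distance to $\partial\Omega$, negative inside $\Omega$, which is $C^2$ on a fixed tubular neighborhood $N_{\rho}(\partial\Omega)$, with $\nabla d=\nu$ the outward unit normal on $\partial\Omega$, and recall that $\eta^{T}D^2 d\,\eta=-A_{\partial\Omega}(\eta,\eta)$ for vectors $\eta$ tangent to $\partial\Omega$, where $A_{\partial\Omega}$ is the second fundamental form.

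First I would fix $x\in\partial\Omega$ and Taylor expand $d$ along $\Phi_t(x)=x+t\eta(x)+\tfrac{t^2}{2}\zeta^{\eta}(x)$. Using $d(x)=0$, $\nabla d(x)=\nu(x)$ and the tangency $\nu\cdot\eta=0$, the coefficient of $t$ vanishes and the coefficient of $t^2$ equals $\tfrac12\big(\nu\cdot\zeta^{\eta}+\eta^{T}D^2 d\,\eta\big)$. Since $\nu\cdot\eta=0$ on $\partial\Omega$, the term $-(\text{div}\eta)\eta$ contributes nothing to $\nu\cdot\zeta^{\eta}$, while the normal component of $(\eta\cdot\nabla)\eta=\nabla_{\eta}\eta$ is exactly the second fundamental form, $\nu\cdot\zeta^{\eta}=A_{\partial\Omega}(\eta,\eta)$. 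This cancels precisely against $\eta^{T}D^2 d\,\eta=-A_{\partial\Omega}(\eta,\eta)$, so the $t^2$ coefficient is zero and $d(\Phi_t(x))=O(t^3)$ uniformly in $x\in\partial\Omega$. Conceptually, writing $\Psi_t$ for the flow of $\eta$, which preserves $\partial\Omega$ because $\eta$ is tangent, one has $\Phi_t=\Psi_t-\tfrac{t^2}{2}(\text{div}\eta)\eta+O(t^3)$; the correction is tangential to leading order, so boundary points leave $\partial\Omega$ only at order $t^3$.

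Next I would convert this normal estimate into a volume bound. Since the tangential part of $\Phi_t|_{\partial\Omega}$ is a near-identity $C^1$ map of the compact surface $\partial\Omega$ and the normal part is $O(t^3)$, for small $t$ the image $\Phi_t(\partial\Omega)$ is a graph $\{x'+\psi_t(x')\nu(x'):x'\in\partial\Omega\}$ over $\partial\Omega$ with $\|\psi_t\|_{L^{\infty}(\partial\Omega)}=O(t^3)$, and by continuity $\Phi_t(\Omega)$ lies on the inner side of this graph. The symmetric difference $\Omega\triangle\Phi_t(\Omega)$ is then the region swept between $\partial\Omega$ and the graph; expressing it in the Fermi coordinates $(x',s)\mapsto x'+s\nu(x')$, whose Jacobian is bounded on $N_{\rho}$, gives
$$|\Omega\triangle\Phi_t(\Omega)|\leq C\int_{\partial\Omega}|\psi_t(x')|\,d\mathcal{H}^{N-1}(x')\leq C\,\|\psi_t\|_{L^{\infty}(\partial\Omega)}\,\mathcal{H}^{N-1}(\partial\Omega)=O(t^3)=o(t^2),$$
which is the Claim.

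The main obstacle is the cancellation in the second step: one must verify carefully, with consistent sign conventions, that the normal component of $(\eta\cdot\nabla)\eta$ on $\partial\Omega$ equals $A_{\partial\Omega}(\eta,\eta)$ and that this is the exact negative of $\eta^{T}D^2 d\,\eta$. This is where the tangency of $\eta$ and the precise algebraic form of $\zeta^{\eta}$ (hence the identity (\ref{GoodIden})) enter decisively: any other choice of acceleration would leave an $O(t^2)$ normal displacement of $\partial\Omega$ and only yield $|\Omega\triangle\Phi_t(\Omega)|=O(t^2)$, not the $o(t^2)$ needed for admissibility.
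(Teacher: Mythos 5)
Your argument is correct, and it takes a genuinely different route from the one in the paper. The paper first uses $|\Phi_t(\Omega)|=|\Omega|+o(t^2)$ (i.e.\ (\ref{volt2})) to reduce the Claim to the single estimate $|\Omega\backslash\Phi_t(\Omega)|=o(t^2)$, and then argues by modifying the normal component $\xi$ of $\eta$ so that it vanishes on the ``escaping'' components, comparing $\Phi_t$ with the modified deformation $\tilde\Phi_t$ whose image stays inside $\Omega$ and still has volume $|\Omega|+o(t^2)$. You instead prove the stronger, localized statement that each boundary point is displaced off $\partial\Omega$ by only $o(t^2)$ (indeed $O(t^3)$ when the data are $C^3$), by Taylor expanding the signed distance $d\circ\Phi_t$ and exhibiting the exact cancellation $\nu\cdot\zeta^{\eta}=\nu\cdot\left((\eta\cdot\nabla)\eta\right)=-\eta^{T}D^2d\,\eta$ on $\partial\Omega$, which is nothing but the tangential derivative of the tangency relation $\nabla d\cdot\eta=0$ along $\partial\Omega$ (the term $-(\text{div}\,\eta)\eta$ being tangential there and contributing nothing); this is convention-free and correct. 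Your route pinpoints exactly where the tangency of $\eta$ and the specific algebraic form of $\zeta^{\eta}$ enter, and it yields the stronger conclusion that the symmetric difference $(\Omega\backslash\Phi_t(\Omega))\cup(\Phi_t(\Omega)\backslash\Omega)$ is contained in an $o(t^2)$-tubular neighborhood of $\partial\Omega$, from which the volume bound is immediate; the paper's argument is softer and avoids the distance function. Two routine points are left implicit in your write-up and are worth recording: first, you need $\partial\Omega$ to be $C^2$ so that $d$ is $C^2$ near $\partial\Omega$ (harmless here, since $\Omega$ is assumed smooth and part (ii) already invokes $A_{\partial\Omega}$), and with only $C^2$ regularity Taylor's theorem gives $d(\Phi_t(x))=o(t^2)$ uniformly on the compact $\partial\Omega$ rather than $O(t^3)$ --- which is all the Claim requires; second, the assertion that $\Omega$ and $\Phi_t(\Omega)$ differ only in the slab between $\partial\Omega$ and its graph needs the short separation argument that every connected component of $\left\{|d|>\|\psi_t\|_{L^\infty}\right\}$ is disjoint from $\partial\left(\Phi_t(\Omega)\right)=\Phi_t(\partial\Omega)$ and hence lies entirely inside or entirely outside $\Phi_t(\Omega)$, the correct alternative being decided by a point at fixed positive depth. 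Neither point affects the validity of the proof.
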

By virtue of the Inverse Function Theorem, we can see that $\Omega\backslash \Phi_t(\Omega)\cup\Phi_t(\Omega)\backslash \Omega$ consists of domains around the boundary $\partial\Omega$. By choosing the extension $\eta$ of $\xi \stackrel{\rightarrow}{n}$ to be $0$ outside a compact set containing $\Gamma$ in $\overline{\Omega}$, we can make sure that the number of the above domains is finite. This extension does not change the quantity $J(\xi)$. Using (\ref{volt2}), it suffices to prove Claim \ref{vol_claim}  for the case when $\Omega\backslash \Phi_t(\Omega)\neq\emptyset$ and $\Phi_t(\Omega)\backslash \Omega\neq \emptyset.$ In this case, we only need to show that
$$|\Omega\backslash \Phi_t(\Omega)| = o(t^2).$$
Suppose $\Omega_i (i\in I)$ are components of $\Omega$ such that $\Phi_t(\Omega_i)\subset \RR^n\backslash \overline{\Omega}.$ We modify the normal component $\xi$ of $\eta$ on $\Gamma$ to be $\tilde\xi$ such that $\tilde\xi =0$ on
$(\cup \Omega_i)\cap E_0$ and $\int_{\Gamma} \tilde\xi d\mathcal{H}^{N-1} =0.$ This can be done by modifying the value of $\xi$ in a compact set $K\subset\subset \Gamma$. We 
extend $\tilde\xi$ to vector field $\tilde \eta$ on $\RR^N$ having properties similar to $\eta$. Let
$$\tilde \Phi_t(x)= x + t\tilde\eta(x) + \frac{t^2}{2}\zeta^{\tilde \eta}(x).$$
Then, $\tilde\Phi_t(\Omega)= \Phi_t(\Omega)\cap\Omega\subset\Omega.$ Moreover, 
$|\tilde\Phi_t(\Omega)| = |\Omega| + o(t^2).$
Therefore the claim follows from
$$|\Omega\backslash \Phi_t(\Omega)| = |\Omega\backslash \tilde\Phi_t(\Omega)|=o(t^2).$$

With Claim \ref{vol_claim}, we can finish the proof of the Poincar\'e inequality. Indeed, since $\Phi_t(\Gamma)\supset \partial E_t\cap\Omega$ with equality when $t=0$, we find that
$$J(\xi)=\delta^2 E(\Gamma, \eta, \zeta^{\eta})=\left.\frac{d^2}{dt^2}\right\rvert_{t=0} \mathcal{H}^{N-1}(\Phi_t(\Gamma))\geq \left.\frac{d^2}{dt^2}\right\rvert_{t=0} \mathcal{H}^{N-1}(\partial E_t\cap\Omega)\geq 0.$$
The first inequality is a relation between our particular second inner variation and the one particular second variation in the sense of Sternberg and Zumbrun \cite{SZ2} while the second inequality follows from the stability for $E_0$. Hence the Poincar\'e inequality follows.\\
{\it Proof of part (iii)}. Let $u_{\e}$ and $u_{0}$ be as in Theorem \ref{isolate2} but now equipped with the volume constraint (\ref{volc}). In the presence of a volume constraint, the first variation of $E_{\e}$ satisfies
$\e \Delta u_{\e}- \e^{-1}W^{'}(u_{\e}) =\lambda_{\e}$
where $\lambda_{\e}$ is the (constant) Lagrange multiplier and for all $\varphi\in C_{0}^{1}(\Omega)$, we have
$$dE_{\e}(u_{\e}, \varphi)=\lambda_{\e}\int_{\Omega}\varphi dx.$$
With the perturbation vector field $\eta^{\e}$, we define
$$\zeta^{\e} = - (div\eta^{\e})\eta^{\e} + (\eta^{\e}\cdot\nabla)\eta^{\e}, ~\Phi_{\e, t} (x) = x + t\eta^{\e}(x) + \frac{t^2}{2}\zeta^{\e}(x).$$ 
We remark that the family $\{ u_{\e}(\Phi_{\e, t}^{-1}(x))\}$ preserves the mass of $u_{\e}$ up to second order in $t$. Indeed, using a change of variables and (\ref{det_expand}), we find that
$$\left.\frac{d}{dt}\right\rvert_{t=0} \int_{\Omega}  u_{\e}(\Phi_{\e, t}^{-1}(x))dx= \int_{\Omega}u_{\e}(x) div \eta^{\e}(x) dx=0$$
and
$$\left.\frac{d^2}{dt^2}\right\rvert_{t=0} \int_{\Omega}  u_{\e}(\Phi_{\e, t}^{-1}(x)) dx=\int_{\Omega}u_{\e}(x) [\text{div}\zeta^{\e} + (\text{div}\eta^{\e})^2 - \text{trace}((\nabla\eta^{\e})^2)] dx=0.$$
Formula (\ref{ut_expand}) in the proof of Proposition \ref{all_var} gives
$$ u_{\e}(\Phi_{\e, t}^{-1}(y)) = u_{\e} (y) -t\nabla u_{\e} \cdot \eta^{\e} + \frac{t^2}{2} X_{\e} + O(t^3)
$$
where using formula (\ref{Xzero}), and taking into account the choice of $\zeta^{\e}$, we have
$$X_{\e}= (D^2 u_{\e}(y) \cdot \eta^{\e}(y), \eta^{\e} (y)) + (\nabla u_{\e}(y), (\eta^{\e}\cdot\nabla ) \eta^{\e} (y) + div (\eta^{\e}) \eta^{\e})= div ((\nabla u_{\e}\cdot \eta^{\e})\eta^{\e}).$$
Using the divergence theorem and the fact that $\eta^\e=0$ on $\partial\Omega$, we get
\begin{equation}\int_{\Omega} X_{\e}dx = \int_{\partial\Omega} (\nabla u_{\e}\cdot\eta^{\e})(\eta^{\e}\cdot\nu)d\mathcal{H}^{N-1}=0,
\label{Xvanish}
\end{equation}
where $\nu$ is the unit outer normal on $\partial\Omega$. 

Using the relation (\ref{SV_eq}) between different notions of variations in Proposition \ref{all_var} for the functional $E_{\e}$ with velocity vector field $\eta^{\e}$ and acceleration vector field $\zeta^{\e}$, we obtain
$$d^2 E_{\e} (u_{\e}, -\nabla u_{\e}\cdot\eta^{\e})=\delta^2 E_{\e} (u_{\e}, \eta^{\e}, \zeta^{\e})-dE_{\e}(u_{\e}, X_{\e})=\delta^2 E_{\e} (u_{\e}, \eta^{\e}, \zeta^{\e}) -\lambda_{\e}\int_{\Omega} X_{\e}dx.$$
Thus, by (\ref{Xvanish}), we obtain
\begin{equation}d^2 E (u_{\e},  -\nabla u_{\e}\cdot\eta^{\e})= \delta^2 E_{\e} (u_{\e}, \eta^{\e}, \zeta^{\e}).
\label{2svm}
\end{equation}
Using
$$\lim_{\e\rightarrow 0}\|\eta^{\e}-\eta\|_{C^{2}(\overline{\Omega})}=0$$
and the explicit formula for $\zeta^{\eta}$ and $\zeta^{\e}$ in terms of $\eta$ and $\eta^{\e}$, we find that
$$\lim_{\e\rightarrow 0}\|\zeta^{\e}-\zeta^{\eta}\|_{C^{1}(\overline{\Omega})}=0.$$
Combining these last two limits with the uniform boundedness of $E_{\e}(u_{\e})$ and the formula for $\delta^2 E_{\e} (u_{\e}, \eta^{\e}, \zeta^{\e})$ in (\ref{svep-p}) with $p=2$, we conclude that
$$\lim_{\e\rightarrow 0}\delta^2 E_{\e} (u_{\e}, \eta^{\e}, \zeta^{\e})= \lim_{\e\rightarrow 0}\delta^2 E_{\e} (u_{\e}, \eta, \zeta^{\eta}).$$
Note that, as in the proof of Theorem \ref{isolate2}, we have $\lim_{\e\rightarrow 0} E_{\e}(u_\e) = E_{2}(\Gamma)=c_2 E(\Gamma).$
As a consequence, we obtain from (\ref{2svm}) and Theorem \ref{thm-ACp} the desired formula 
$$\lim_{\e\rightarrow 0}d^2 E (u_{\e},  -\nabla u_{\e}\cdot\eta^{\e}) = \lim_{\e\rightarrow 0}\delta^2 E_{\e} (u_{\e}, \eta, \zeta^{\eta})=c_2\left\{
 \delta^{2}E(\Gamma,\eta,\zeta^{\eta}) +  \int_{\Gamma} (\stackrel{\rightarrow}{n},\stackrel{\rightarrow}{n}\cdot\nabla\eta)^2 d\mathcal{H}^{N-1}\right\}.$$

\end{proof}

It remains to prove Proposition \ref{all_var}.
\begin{proof}[Proof of Proposition \ref{all_var}]
Our proof goes by explicitly computing all variations and inner variations. We will write
$F= F(z, {\bf p})$
for $z\in \RR$ and ${\bf p} =(p_1, \cdots, p_N)\in \RR^N$. We also set $\nabla_{\bf p} F = (F_{p_1}, \cdots, F_{p_N})$ and $u_t(y) = u(\Phi_t^{-1}(y))$. \\
{\it Usual variations.}
Carrying out the computation of $\left.\frac{d}{dt}\right\rvert_{t=0} A(u + t\varphi)$, and integrating by parts, we find that the first variation of $A$ at $u$ with respect to $\varphi\in C_{c}^{1}(\Omega)$ is given by
\begin{eqnarray}dA(u,\varphi)=\left.\frac{d}{dt}\right\rvert_{t=0} A(u + t\varphi)=\int_{\Omega}\left(F_{z}\varphi + F_{p_i}\varphi_i\right) dx=\int_{\Omega}\left(F_{z} - (F_{p_i})_{x_i}\right)\varphi dx.
\label{fveq}
\end{eqnarray}
The second variation of $A$ at $u$ with respect to $\varphi$ is 
\begin{equation}d^2 A(u,\varphi)=\left.\frac{d^2}{dt^2}\right\rvert_{t=0} A(u + t\varphi)=\int_{\Omega}\left( F_{zz}\varphi^2 + 2 F_{z p_i} \varphi\varphi_i + F_{p_i p_j}\varphi_i \varphi_j\right) dx.
\label{sveq}
\end{equation}
{\it Inner variations.}
The proof is based on the following formula
\begin{equation}u_t(y) = u (y) -t\nabla u \cdot \eta + \frac{t^2}{2} X_0 + O(t^3)
\label{ut_expand}
\end{equation}
where $X_0$ is given by (\ref{Xzero}).
We indicate how to derive this formula. Recalling the definition of $\Phi_t$ in (\ref{map-deform}),
we have
$$x= \Phi_t (\Phi^{-1}_t(x))= \Phi_t^{-1}(x) + t \eta (\Phi^{-1}_t(x)) + \frac{t^2}{2} \zeta (\Phi^{-1}_t(x)).$$
Differentiating both sides with respect to $t$, one gets
$$0=\frac{d}{dt}\Phi^{-1}_t(x) + t\nabla\eta \frac{d}{dt}\Phi^{-1}_t(x) + \eta (\Phi^{-1}_t(x)) + t\zeta (\Phi^{-1}_t(x)) + \frac{t^2}{2} \nabla \zeta \frac{d}{dt}\Phi^{-1}_t(x),$$
and 
\begin{multline*}0=\frac{d^2}{dt^2}\Phi^{-1}_t(x) + \nabla\eta \frac{d}{dt}\Phi^{-1}_t(x) + t \frac{d}{dt}(\nabla\eta \frac{d}{dt}\Phi^{-1}_t(x))+ \nabla\eta \frac{d}{dt}\Phi^{-1}_t(x) \\ + \zeta (\Phi^{-1}_t(x)) + t\frac{d}{dt}\zeta (\Phi^{-1}_t(x)) + \frac{d}{dt}(\frac{t^2}{2} \nabla \zeta \frac{d}{dt}\Phi^{-1}_t(x)).
\end{multline*}
Thus,  evaluating the last two equations at $t=0$, we get 
\begin{equation*}\frac{d}{dt}\Phi^{-1}_t(x)\mid_{t=0} =-\eta(x);~\frac{d^2}{dt^2}\Phi^{-1}_t(x)\mid_{t=0}=  2\nabla\eta \eta(x) - \zeta(x).
\end{equation*}
Now, view $u_t(y)= u(\Phi^{-1}_t(y))$ as a function of $t$. Then

$$u_t(y)\mid_{t=0}= u(y),~~\frac{d}{dt}u_t(y)\mid_{t=0} = \nabla u \frac{d}{dt}\Phi^{-1}_t(y)\mid_{t=0} =-\nabla u(y)\eta (y),$$
and
\begin{eqnarray*}\frac{d^2}{dt^2}u_t(y)\mid_{t=0}&=& \left(D^2 u (\frac{d}{dt}\Phi^{-1}_t(y), \frac{d}{dt}\Phi^{-1}_t(y)) + \nabla u \frac{d^2}{dt^2}\Phi^{-1}_t(y)\right)\mid_{t=0}\\&=& D^2 u(y) (\eta(y), \eta(y)) + (\nabla u(y), 2\nabla\eta \eta(y) - \zeta(y)),
\end{eqnarray*}
and hence (\ref{ut_expand}) follows from the Taylor expansion of $u_t$ in $t$.\\
By change of variables $y=\Phi_{t}(x)$, we have 
\begin{equation}
 A(u_{t}) = \int_{\Omega} F(u(x),\nabla u\cdot\nabla \Phi_{t}^{-1}(\Phi_{t}(x)) \abs{\text{det}\nabla \Phi_{t}(x)}dx.
\label{rewrite_E}
\end{equation}
We need to expand the right-hand side of the above formula up to the second power of $t$. 
Note that
\begin{equation*}
 \nabla\Phi_{t}^{-1}(\Phi_{t}(x)) = [I + t\nabla\eta (x) +\frac{t^2}{2}\nabla\zeta(x)]^{-1} = I - t\nabla\eta -\frac{t^2}{2}\nabla\zeta(x)+ t^{2}(\nabla\eta)^2 + O(t^3),
\end{equation*}
hence
\begin{equation*}
 \nabla u\cdot\nabla \Phi_{t}^{-1}(\Phi_{t}(x)) = \nabla u - t\nabla u\cdot\nabla\eta -\frac{t^2}{2}\nabla u\cdot\nabla\zeta(x)+ t^{2}\nabla u\cdot(\nabla\eta)^2 + O(t^3).
\end{equation*}
Plugging this equation together with (\ref{det_expand}) into (\ref{rewrite_E}), we find that
$$\delta A(u,\eta,\zeta)= \mathcal{A}_{0}^{'}(0),~\delta^2 A(u,\eta,\zeta)= \mathcal{A}_{0}^{''}(0)$$
where
$$\mathcal{A}_{0}(t) = \int_{\Omega} F(u, \nabla u - t\nabla u\cdot\nabla\eta - t^2 Y) (1 + t div \eta + \frac{t^2}{2}X) dx,$$
with
$$X= div \zeta + (div \eta)^2 -trace (\nabla\eta)^2;~~Y= \frac{1}{2} \nabla u\cdot\nabla\zeta-\nabla u\cdot (\nabla \eta)^2.$$
Let $\eta= (\eta^1, \cdots,\eta^N).$ Then, integrating by parts, we find that the first inner variation is
\begin{eqnarray*}\delta A(u,\eta,\zeta)=\mathcal{A}_{0}^{'}(0) &=& \int_{\Omega}\left( F div \eta - F_{p_i} 
\frac{\partial}{x_i}\eta^j u_j\right) dx \\ &=& \int_{\Omega} [\frac{\partial}{x_j} F  - \frac{\partial}{x_i}( F_{p_i}  u_j] (-\eta^j) dx= \int_{\Omega}[F_z -\frac{\partial }{\partial x_i} F_{p_i}] (- u_j \eta^j)dx\\
&=& dA(u,-\nabla u\cdot\eta),
\end{eqnarray*}
proving (\ref{FV_eq}).

Though not directly used in the proof of our proposition, we include a formula for the second inner variation here because of its many uses in Gamma-converging energies (see \cite{Le} and the proof of Theorem \ref{mainSV}).
The second inner variation is
\begin{equation}\delta^{2} A(u,\eta,\zeta)=\int_{\Omega}\left\{ FX - 2 (\nabla_{\bf p} F, \nabla u\cdot \nabla\eta) div \eta - 2 (\nabla_{\bf p} F, Y) + F_{p_i p_j}(\nabla u\cdot\nabla \eta)^{i}(\nabla u\cdot\nabla \eta)^{j}\right\} dx.
\label{SVformula}
\end{equation}
Indeed, we write
$$\mathcal{A}^{''}_0(0) = \mathcal{A}^{''}_{D}(0)+ \mathcal{A}^{''}_{B}(0) + \int_{\Omega} FX$$
where
$$\mathcal{A}_{D}(t)=\int_{\Omega} F(u, \nabla u - t\nabla u\cdot\nabla\eta - t^2 Y),~\mathcal{A}_{B}(t)= \int_{\Omega} F(u, \nabla u - t\nabla u\cdot\nabla\eta - t^2 Y) t div\eta.$$
We note that
$$\mathcal{A}^{'}_{B}(t)= \int_{\Omega} F(u, \nabla u - t\nabla u\cdot\nabla\eta - t^2 Y) div \eta + \int_{\Omega} \frac{d}{dt}F(u, \nabla u - t\nabla u\cdot\nabla\eta - t^2 Y) t div \eta.$$
Therefore
\begin{eqnarray*}\mathcal{A}^{''}_{B}(0)=\int_{\Omega} 2 \frac{d}{dt}F(u, \nabla u - t\nabla u\cdot\nabla\eta - t^2 Y)div\eta\mid_{t=0} = \int_{\Omega}-2 (\nabla_{\bf p} F,\nabla u\cdot \nabla\eta)div\eta.
\end{eqnarray*}
Now, we have
$$\mathcal{A}^{'}_{D}(t) = \int_{\Omega}-F_{p_i}(u, \nabla u - t\nabla u\cdot\nabla\eta - t^2 Y) (\frac{\partial}{x_i}\eta^j u_j + 2t Y^i).$$
Therefore
\begin{eqnarray*}\mathcal{A}^{''}_{D}(0)&=& \int_{\Omega} F_{p_i p_k} (\frac{\partial}{x_i}\eta^j u_j)(\frac{\partial}{x_k}\eta^l u_l) - 2F_{p_i} Y^i\\ &=& \int_{\Omega} -2(\nabla_{\bf p} F, Y) + F_{p_i p_j} ((\nabla u\cdot\nabla\eta)^i, (\nabla u\cdot\nabla \eta)^j).
\end{eqnarray*}
We observe, using (\ref{ut_expand}), that the second inner variation is also equal to the second derivative of the following function at $0$
$$\mathcal{A}_{1}(t) = \int_{\Omega} F(u-t \nabla u\cdot\eta + \frac{t^2}{2}X_0, \nabla u - t\nabla (\nabla u\cdot\eta) +\frac{t^2}{2}\nabla X_0) dy.$$
We compute
$$\mathcal{A}_{1}^{'}(t) =\int_{\Omega} F_z (-\nabla u\cdot\eta + t X_0)-F_{p_i} (\frac{\partial}{x_i}(\nabla u\cdot \eta)- t \frac{\partial}{x_i} X_0)$$
and
\begin{multline*}\mathcal{A}_{1}^{''}(t) = \int_{\Omega} F_{zz} (-\nabla u\cdot\eta + t X_0)^2-2F_{zp_i} (\frac{\partial}{x_i}(\nabla u\cdot \eta)- t \frac{\partial}{x_i} X_0)(-\nabla u\cdot\eta + t X_0) \\+ \int_{\Omega}F_{p_i p_j} (\frac{\partial}{x_i}(\nabla u\cdot \eta)- t \frac{\partial}{x_i} X_0)(\partial_{j}(\nabla u\cdot \eta)- t \frac{\partial}{x_j} X_0)+ \int_{\Omega} F_{z} X_0 + F_{p_i} \frac{\partial}{x_i} X_0.
\end{multline*}
It follows that
\begin{multline*}\mathcal{A}_{1}^{''}(0) =\int_{\Omega} F_{zz} (\nabla u\cdot \eta)^2 + 2 F_{zp_i} (\nabla u\cdot\eta)\frac{\partial}{x_i}(\nabla u\cdot \eta) \\+ \int_{\Omega}F_{p_i p_j} \frac{\partial}{x_i} (\nabla u \cdot \eta)\frac{\partial}{x_j}(\nabla u \cdot \eta) + \int_{\Omega} F_{z} X_0 + F_{p_i} \frac{\partial}{x_i}X_0.
\end{multline*}
Comparing the above formula with (\ref{sveq}) and (\ref{fveq}), we find that
$$\delta^2 A(u,\eta, \zeta)=\mathcal{A}_{1}^{''}(0) = \mathcal{A}^{''}(0) (\nabla u\cdot\eta) + \mathcal{A}^{'}(0) (X_0) = d^2 A(u, -\nabla u\cdot \eta) + dA(u, X_0),$$
proving (\ref{SV_eq}).
\end{proof}

\section{p-Laplace Allen-Cahn functionals }
\label{sec-ACp}
In this section, we prove Theorem \ref{thm-ACp}. For $1<p<\infty$, let $q=\frac{p}{p-1}$ be its conjugate. Then
$$E_{\e, p} (u_\e)=\int_{\Omega} \left( \frac{\e^{p-1} |\nabla u_\e|^p}{p} + \frac{W(u_\e)}{q\e}\right) dx.$$
The hypotheses of Theorem \ref{thm-ACp} gives that
\begin{equation}\lim_{\e\rightarrow 0} E_{\e, p}(u_\e) = c_p \mathcal{H}^{n-1}(\Gamma):= E_{p}(\Gamma)
\label{wellprep}
\end{equation}
and that $u_\e\rightarrow u_0$ in $L^{1}(\Omega)$ with $\Gamma$ being the interface between the phases $\pm 1$ of $u_0.$
Let
$$a_\e(x):= \e^{\frac{p-1}{p}}|\nabla u_\e|,~ b_\e(x):= \frac{W^{\frac{p-1}{p}}(u_\e)}{\e^{\frac{p-1}{p}}},~\Phi (t) = \int_{0}^{t} W^{\frac{p-1}{p}}(s) ds.$$
Then, we have the following simple but very useful relations. 
\begin{lemma} We have
\begin{equation}
\label{ener1}
\lim_{\e\rightarrow 0}\int_{\Omega} [\frac{a_{\e}^p}{p}+ \frac{b_{\e}^q}{q}- a_\e b_\e] dx =0
\end{equation}
and
\begin{equation}\lim_{\e\rightarrow 0}\int_{\Omega}|\nabla \Phi (u_\e)| dx= \int_{\Omega}|\nabla \Phi (u_0)|dx.
\label{ener1bis}
\end{equation}
\label{lem-ener}
\end{lemma}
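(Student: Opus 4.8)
The plan is to reduce the lemma to two elementary ingredients---an algebraic rewriting of the energy and Young's inequality---and then to close both statements by a sandwich argument that exploits the well-preparedness hypothesis (\ref{wellprep}).

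First I would record the algebraic identities tying the three relevant quantities together. Since $q=p/(p-1)$ we have $\frac{p-1}{p}\,q=1$, so that $\frac{a_\e^p}{p}=\frac{\e^{p-1}|\nabla u_\e|^p}{p}$ and $\frac{b_\e^q}{q}=\frac{W(u_\e)}{q\e}$; integrating over $\Omega$ gives $\int_\Omega\big(\frac{a_\e^p}{p}+\frac{b_\e^q}{q}\big)\,dx=E_{\e,p}(u_\e)$. On the other hand the chain rule yields $\nabla\Phi(u_\e)=W^{(p-1)/p}(u_\e)\,\nabla u_\e$, hence $|\nabla\Phi(u_\e)|=a_\e b_\e$ pointwise. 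Consequently the integral in (\ref{ener1}) equals $E_{\e,p}(u_\e)-\int_\Omega|\nabla\Phi(u_\e)|\,dx$, so (\ref{ener1}) is precisely the assertion that $E_{\e,p}(u_\e)-\int_\Omega|\nabla\Phi(u_\e)|\,dx\to 0$.

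Next, Young's inequality $a_\e b_\e\le \frac{a_\e^p}{p}+\frac{b_\e^q}{q}$ shows that the integrand in (\ref{ener1}) is nonnegative and, after integration, that $\int_\Omega|\nabla\Phi(u_\e)|\,dx\le E_{\e,p}(u_\e)$. I would then identify the limit of the left-hand side. Assuming $\Phi(u_\e)\to\Phi(u_0)$ in $L^1(\Omega)$ (the one delicate point, addressed below), lower semicontinuity of the total variation gives $\int_\Omega|\nabla\Phi(u_0)|\le\liminf_{\e\to 0}\int_\Omega|\nabla\Phi(u_\e)|$. A direct computation pins down the left side: $\Phi(u_0)$ takes the two values $\Phi(\pm1)$ and jumps across $\Gamma$, so its total variation is $|\Phi(1)-\Phi(-1)|\,\mathcal{H}^{N-1}(\Gamma)$, and since $\Phi(1)-\Phi(-1)=\int_{-1}^{1}W^{(p-1)/p}(s)\,ds=c_p$ we obtain $\int_\Omega|\nabla\Phi(u_0)|=c_p\mathcal{H}^{N-1}(\Gamma)=E_p(\Gamma)$.

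The two statements now follow by sandwiching. Combining the above with (\ref{wellprep}) gives the chain $E_p(\Gamma)=\int_\Omega|\nabla\Phi(u_0)|\le\liminf_\e\int_\Omega|\nabla\Phi(u_\e)|\le\limsup_\e\int_\Omega|\nabla\Phi(u_\e)|\le\limsup_\e E_{\e,p}(u_\e)=E_p(\Gamma)$, so every inequality is an equality; this is exactly (\ref{ener1bis}). Then the nonnegative quantity $E_{\e,p}(u_\e)-\int_\Omega|\nabla\Phi(u_\e)|\,dx$ tends to $E_p(\Gamma)-E_p(\Gamma)=0$, which is (\ref{ener1}). The hard part is the $L^1$ convergence $\Phi(u_\e)\to\Phi(u_0)$: because $W^{(p-1)/p}$ grows superlinearly (like $|s|^{4(p-1)/p}$), the a.e.\ convergence coming from $u_\e\to u_0$ in $L^1$ along a subsequence does not by itself control $\Phi(u_\e)$ where $u_\e$ is large. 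I would handle this either through the potential bound $\int_\Omega W(u_\e)\,dx=O(\e)$ (forced by the energy bound) combined with a truncation/equi-integrability argument, or by noting that the uniform bound $\int_\Omega|\nabla\Phi(u_\e)|\,dx\le E_{\e,p}(u_\e)\le C$ yields BV-compactness of $\{\Phi(u_\e)\}$, whose a.e.\ limit must be $\Phi(u_0)$.
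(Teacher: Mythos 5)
Your proof is correct and follows essentially the same route as the paper: Young's inequality to bound the $p$-Laplace Allen--Cahn energy density from below by $|\nabla\Phi(u_\e)|$, lower semicontinuity of the total variation to bound $\liminf_\e\int_\Omega|\nabla\Phi(u_\e)|$ from below by $\int_\Omega|\nabla\Phi(u_0)|=c_p\mathcal{H}^{N-1}(\Gamma)$, and a sandwich with the well-preparedness hypothesis (\ref{wellprep}) forcing all inequalities to be equalities. You are in fact more careful than the paper on the one delicate point --- the $L^1$ convergence of $\Phi(u_\e)$ needed to invoke lower semicontinuity, which the paper passes over silently --- and either of your proposed fixes (truncation so that $\Phi$ becomes Lipschitz on the relevant range, or BV-compactness of $\{\Phi(u_\e)\}$) closes it.
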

\begin{proof}
By Young's inequality, 
$$\frac{\e^{p-1} |\nabla u_\e|^p}{p} + \frac{W(u_\e)}{q\e} = \frac{[\e^{\frac{p-1}{p}} |\nabla u_\e|]^p}{p} + \frac{1}{q}[\frac{W^{\frac{p-1}{p}}(u_\e)}{\e^{\frac{p-1}{p}}}]^q
 \geq |\nabla u_\e| W^{\frac{p-1}{p}}(u_\e)= |\nabla \Phi (u_\e)|.$$
By lower semicontinuity and the coarea formula,
\begin{eqnarray*} \liminf_{\e\rightarrow 0} E_{\e, p}(u_\e) = \liminf_{\e\rightarrow 0} \int_{\Omega}[\frac{a_{\e}^p}{p}+ \frac{b_{\e}^q}{q}]dx &\geq& \liminf_{\e\rightarrow 0}  \int_{\Omega} |\nabla \Phi (u_\e)|dx\\ &\geq& \int_{\Omega} |\nabla \Phi (u_0)|dx= \mathcal{H}^{n-1}(\Gamma)\times (\Phi (1) -\Phi (-1)) \\ &=& \mathcal{H}^{n-1}(\Gamma) \int_{-1}^{1} (W(s))^{\frac{p-1}{p}} ds \equiv c_p \mathcal{H}^{n-1}(\Gamma).
\end{eqnarray*}
Combining the above inequalities with (\ref{wellprep}), we conclude that (\ref{ener1}) and (\ref{ener1bis}) hold. 
\end{proof}
We now show the following equi-partition of energy.
\begin{lemma}We have 
\begin{equation}\lim_{\e\rightarrow 0} \int_{\Omega} |\e^{p-1} |\nabla u_\e|^p - \frac{W(u_\e)}{\e}| dx =\lim_{\e\rightarrow 0} \int_{\Omega}|a_{\e}^p- b_{\e}^{q}|dx=0
\label{equi-ab}
\end{equation}
and
\begin{equation}\lim_{\e\rightarrow 0}\int_{\Omega}|\e^{p-1} |\nabla u_\e|^p- |\nabla \Phi (u_\e)|| dx= \lim_{\e\rightarrow 0}\int_{\Omega}|a_{\e}^{p}- a_{\e}b_{\e}|dx=0.
\label{equi-abphi}
\end{equation}
\label{propo-equi}
\end{lemma}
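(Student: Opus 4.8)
The plan is to deduce both identities from the $L^1$-smallness of the Young defect
$$D_\e := \frac{a_\e^p}{p} + \frac{b_\e^q}{q} - a_\e b_\e \geq 0,$$
whose integral tends to $0$ by (\ref{ener1}) in Lemma \ref{lem-ener}. The key device will be a \emph{quantitative} form of Young's inequality: there is a constant $C=C(p)>0$ such that for all $a,b\geq 0$,
$$|a^p - b^q| \leq C\sqrt{(a^p+b^q)\left(\tfrac{a^p}{p}+\tfrac{b^q}{q}-ab\right)}.$$
To prove this I would set $t=a^p$ and $s=b^q$, so that $ab=t^{1/p}s^{1/q}$ and the defect becomes $\frac{t}{p}+\frac{s}{q}-t^{1/p}s^{1/q}$. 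Both $|t-s|$ and this defect are positively homogeneous of degree one in $(t,s)$, so it suffices to verify the inequality on the simplex $\{t+s=1\}$ and then undo the scaling. On that segment the defect vanishes exactly on the diagonal $t=s=\tfrac12$ — the equality case of weighted arithmetic–geometric mean — where it vanishes to second order while $|t-s|$ vanishes only to first order; a Taylor expansion near the diagonal combined with a compactness argument on the (compact) segment yields $|t-s|\leq C\sqrt{D}$, which is the scaled-down form of the asserted bound.

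Granting this pointwise inequality, (\ref{equi-ab}) follows at once from the Cauchy--Schwarz inequality:
$$\int_{\Omega}|a_\e^p-b_\e^q|\,dx \leq C\left(\int_{\Omega}(a_\e^p+b_\e^q)\,dx\right)^{1/2}\left(\int_{\Omega}D_\e\,dx\right)^{1/2}.$$
Here the first factor is bounded because $a_\e^p+b_\e^q \leq \max(p,q)\bigl(\frac{a_\e^p}{p}+\frac{b_\e^q}{q}\bigr)$, so that $\int_\Omega(a_\e^p+b_\e^q)\,dx\leq \max(p,q)\,E_{\e,p}(u_\e)$, which stays bounded by (\ref{wellprep}); the second factor tends to $0$ by (\ref{ener1}). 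Hence the product vanishes in the limit, giving the first assertion (and, since $a_\e^p=\e^{p-1}|\nabla u_\e|^p$ and $b_\e^q=W(u_\e)/\e$, the equivalent statement in (\ref{equi-ab})).

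Finally, (\ref{equi-abphi}) would follow from (\ref{equi-ab}) and (\ref{ener1}) by a purely algebraic splitting, with no new analysis. Since $\frac1p+\frac1q=1$, one has $a_\e^p-\bigl(\frac{a_\e^p}{p}+\frac{b_\e^q}{q}\bigr)=\frac1q(a_\e^p-b_\e^q)$, whence by the triangle inequality and $D_\e\geq 0$,
$$\int_{\Omega}|a_\e^p-a_\e b_\e|\,dx \leq \frac1q\int_{\Omega}|a_\e^p-b_\e^q|\,dx + \int_{\Omega}D_\e\,dx,$$
and both terms on the right tend to $0$; recalling $a_\e b_\e=|\nabla\Phi(u_\e)|$ gives (\ref{equi-abphi}). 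I expect the only genuine obstacle to be the quantitative Young inequality, and specifically its behavior near the diagonal, where the defect degenerates quadratically and dictates the square-root loss; the remaining steps (homogeneity reduction, Cauchy--Schwarz, the uniform energy bound, and the algebraic splitting) are routine.
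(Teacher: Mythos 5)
Your argument is correct, and it reaches the conclusion by a genuinely different technical route from the paper, although both proofs rest on the same principle: a quantitative stability estimate for Young's inequality applied to the defect $D_\e=\frac{a_\e^p}{p}+\frac{b_\e^q}{q}-a_\e b_\e$, whose integral vanishes by (\ref{ener1}). The paper writes $a_\e=t_\e b_\e^{q/p}$, reduces to $p\geq 2$ by the $p\leftrightarrow q$ symmetry, and applies H\"older with exponents $(p,q)$ to the factorization $|t_\e^p-1|=|t_\e-1|\cdot|\frac{t_\e^p-1}{t_\e-1}|$, controlling $\int b_\e^q|t_\e-1|^p$ by the defect through the explicit elementary inequality $|\frac{t^p-1}{p}-(t-1)|\geq\frac1p|t-1|^p$ (valid for $p\geq2$, $t>0$) and the remaining factor by the energy bound. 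Your pointwise inequality $|a^p-b^q|\leq C\sqrt{(a^p+b^q)\,D(a,b)}$ --- established by degree-one homogeneity in $(t,s)=(a^p,b^q)$ together with a Taylor/compactness argument on the simplex, where the restricted defect has a nondegenerate second-order zero at $t=s=\frac12$ (its second derivative there equals $2[1-(\frac1p-\frac1q)^2]>0$) while $|t-s|$ vanishes only to first order --- replaces all of this with a single Cauchy--Schwarz step. What your route buys is the elimination of the case split $p\geq2$ versus $p<2$ and of the two ad hoc scalar inequalities, at the cost of a nonexplicit constant $C(p)$; the paper's route keeps the constants explicit. Your derivation of (\ref{equi-abphi}) from the identity $a_\e^p-a_\e b_\e=\frac1q(a_\e^p-b_\e^q)+D_\e$ is exactly the paper's concluding step.
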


\begin{proof}[Proof of Lemma \ref{propo-equi}]
We need to show that
\begin{equation}\lim_{\e\rightarrow 0}\int_{\Omega}|a_{\e}^p - b_{\e}^q| dx =0.
\label{equi1}
\end{equation}
The roles of $p$ and $q$ can be interchanged in (\ref{ener1}) and (\ref{equi1}) so we can assume for the sake of the proof of (\ref{equi1}) that $p\geq 2.$ 
Let
$a_\e (x) = t_\e(x) b^{\frac{q}{p}}_\e(x).$ Then (\ref{ener1}) gives
\begin{equation}
\label{ener3}
\lim_{\e\rightarrow 0} \int_{\Omega} [\frac{t_{\e}^p}{p} +\frac{1}{q} - t_\e] b_{\e}^q =0.
\end{equation}
Now, by H\"older's inequality, we have
\begin{equation}
\int_{\Omega}|a_{\e}^p - b_{\e}^q|  = \int_{\Omega} b_{\e}^q|t_{\e}^p-1| \leq \left (\int_{\Omega}b_{\e}^q |\frac{t_{\e}^p-1}{t_\e-1}|^q\right)^{\frac{1}{q}}\left(\int_{\Omega}b_{\e}^q |t_{\e}-1|^p\right)^{\frac{1}{p}}.
\label{ener4}
\end{equation}
By using the elementary inequality
$$|\frac{t^p-1}{p}- (t-1)|\geq \frac{1}{p}|t-1|^p~\text{for all}~p\geq 2~\text{and}~t>0, $$
and (\ref{ener3})
\begin{equation}
\label{ener5}
\int_{\Omega}b_{\e}^q |t_{\e}-1|^p  \leq \int_{\Omega} b_{\e}^q |\frac{t_{\e}^p-1}{p}- (t_{\e}-1)|p = 
\int_{\Omega}pb_{\e}^q |\frac{t_{\e}^p}{p} +\frac{1}{q}- t_\e| \rightarrow 0 ~\text{as}~ \e\rightarrow 0.
\end{equation}
By using the elementary inequality
$$|\frac{t^p-1}{t-1}| \leq p (t^{p-1} + 1)~\text{for all}~p\geq 1~\text{and}~t>0, $$ and recalling $q=\frac{p}{p-1}$, we have
\begin{equation}
\label{ener6}
\int_{\Omega} b_{\e}^q \frac{t_{\e}^p-1}{t_\e-1}|^q \leq \int_{\Omega} b_{\e}^q p ^{\frac{p}{p-1}}[t_{\e}^{p-1}+1]^{\frac{p}{p-1}}\leq \int_{\Omega} C_p b_{\e}^{q} (t_{\e}^{p}+ 1) = C_p \int_{\Omega} (b_{\e}^ q + a_{\e}^{p}) \leq C.
\end{equation}
From (\ref{ener4})-(\ref{ener6}), we obtain the equi-partition of energy.\\
To prove (\ref{equi-abphi}), we note that by (\ref{equi-ab}),
$$\int_{\Omega}|a_{\e}^p-a_\e b_\e|= \int_{\Omega}|\frac{a_{\e}^p - b_{\e}^q}{q} + \frac{a_{\e}^p}{p}+ \frac{b_{\e}^q}{q}- a_\e b_\e |\rightarrow 0~\text{as}~\e\rightarrow 0.$$
\end{proof}

\begin{lemma}
We have
\begin{equation}\e^{p-1} \nabla u_{\e}\otimes \nabla u_{\e} |\nabla u_{\e}|^{p-2}\rightharpoonup c_p \stackrel{\rightarrow}{n}\otimes \stackrel{\rightarrow}{n}\mathcal{H}^{N-1}\lfloor \Gamma
\label{2nu}
\end{equation}
and 
\begin{equation}\e^{p-1} \nabla u_{\e}\otimes \nabla u_{\e}\otimes\nabla u_{\e}\otimes |\nabla u_{\e}|^{p-4}\rightharpoonup c_p \stackrel{\rightarrow}{n}\otimes \stackrel{\rightarrow}{n}\otimes \stackrel{\rightarrow}{n} \otimes \stackrel{\rightarrow}{n}\mathcal{H}^{N-1}\lfloor \Gamma.
\label{4nu}
\end{equation}
\label{propo-Res}
\end{lemma}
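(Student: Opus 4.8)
The plan is to deduce both convergences from Reshetnyak's continuity theorem applied to the $\RR^N$-valued measures $\mu_\e := \nabla\Phi(u_\e)\,dx$, and then to trade the weight $|\nabla\Phi(u_\e)| = a_\e b_\e$ for the weight $a_\e^p = \e^{p-1}|\nabla u_\e|^p$ appearing in (\ref{2nu})--(\ref{4nu}) by means of the equipartition estimate (\ref{equi-abphi}). First I would record the two hypotheses of Reshetnyak's theorem. Since $u_\e\to u_0$ in $L^1(\Omega)$, we have $\Phi(u_\e)\to\Phi(u_0)$ in $L^1(\Omega)$ (the same $L^1$-convergence that underlies the lower semicontinuity step in Lemma \ref{lem-ener}), whence the measures $\mu_\e$ converge weakly-$*$ to $\mu := \nabla\Phi(u_0) = c_p\,\stackrel{\rightarrow}{n}\,\mathcal{H}^{N-1}\lfloor\Gamma$, where we used $\Phi(1)-\Phi(-1)=\int_{-1}^{1} W^{\frac{p-1}{p}}(s)\,ds = c_p$. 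Moreover, by (\ref{ener1bis}) the total masses converge, $|\mu_\e|(\Omega)=\int_\Omega|\nabla\Phi(u_\e)|\,dx \to \int_\Omega|\nabla\Phi(u_0)|\,dx = c_p\,\mathcal{H}^{N-1}(\Gamma)=|\mu|(\Omega)$.

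Next, the polar vectors satisfy $d\mu_\e/d|\mu_\e| = \nabla\Phi(u_\e)/|\nabla\Phi(u_\e)| = \nabla u_\e/|\nabla u_\e|$ (using $\Phi'=W^{\frac{p-1}{p}}\geq 0$, and the identity $a_\e b_\e=|\nabla\Phi(u_\e)|$), defined $|\mu_\e|$-a.e., while $d\mu/d|\mu| = \stackrel{\rightarrow}{n}$ on $\Gamma$ up to a sign that is immaterial because the integrands below are even. Reshetnyak's continuity theorem then yields, for every $\varphi\in C_c(\Omega)$ and every continuous $f$ on the unit sphere,
\begin{equation*}
\int_\Omega \varphi(x)\, f\!\left(\frac{\nabla u_\e}{|\nabla u_\e|}\right) a_\e b_\e\,dx \longrightarrow c_p\int_\Gamma \varphi\, f(\stackrel{\rightarrow}{n})\,d\mathcal{H}^{N-1}.
\end{equation*}
Choosing the bounded integrands $f(\nu)=\nu_i\nu_j$ and $f(\nu)=\nu_i\nu_j\nu_k\nu_l$ produces exactly the right-hand sides of (\ref{2nu}) and (\ref{4nu}), except that the weight is $a_\e b_\e=|\nabla\Phi(u_\e)|$ rather than $a_\e^p=\e^{p-1}|\nabla u_\e|^p$.

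Finally I would remove this discrepancy. Since the angular factors $\frac{(\nabla u_\e)^i(\nabla u_\e)^j}{|\nabla u_\e|^2}$ and $\frac{(\nabla u_\e)^i(\nabla u_\e)^j(\nabla u_\e)^k(\nabla u_\e)^l}{|\nabla u_\e|^4}$ have absolute value at most $1$, the equipartition estimate (\ref{equi-abphi}) gives
\begin{equation*}
\left|\int_\Omega \varphi\,\frac{(\nabla u_\e)^i(\nabla u_\e)^j}{|\nabla u_\e|^2}\,(a_\e^p - a_\e b_\e)\,dx\right|\le \|\varphi\|_{L^\infty}\int_\Omega |a_\e^p - a_\e b_\e|\,dx \longrightarrow 0,
\end{equation*}
and likewise for the quartic factor. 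Because $\frac{(\nabla u_\e)^i(\nabla u_\e)^j}{|\nabla u_\e|^2}\,a_\e^p=\e^{p-1}(\nabla u_\e)^i(\nabla u_\e)^j|\nabla u_\e|^{p-2}$, and similarly for the 4-tensor, combining the two displays establishes (\ref{2nu}) and (\ref{4nu}) componentwise, hence as weak-$*$ convergence of the tensor-valued measures.

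The main obstacle is the rigorous verification of the Reshetnyak hypotheses, and in particular the weak-$*$ convergence $\mu_\e\rightharpoonup\mu$ obtained through the $L^1$-convergence $\Phi(u_\e)\to\Phi(u_0)$, which requires some care because $\Phi$ is unbounded; once the strict $BV$-convergence (equivalently the convergence of total variations in (\ref{ener1bis})) is secured, the tensorial conclusions follow mechanically by inserting polynomial integrands and discarding the $a_\e^p-a_\e b_\e$ error.
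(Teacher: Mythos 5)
Your proposal is correct and follows essentially the same route as the paper: both arguments use the equipartition estimate (\ref{equi-abphi}) to replace the weight $a_\e^p$ by $a_\e b_\e=|\nabla\Phi(u_\e)|$, and then apply Reshetnyak's continuity theorem to the measures $\nabla\Phi(u_\e)\,dx$ (whose total variations converge by (\ref{ener1bis})) with the bounded even integrands $\nu_i\nu_j$ and $\nu_i\nu_j\nu_k\nu_l$. The only difference is that you make explicit the verification of the Reshetnyak hypotheses, which the paper leaves implicit.
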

\begin{proof}
We have
$$\e^{p-1} \nabla u_{\e}\otimes \nabla u_{\e} |\nabla u_{\e}|^{p-2} =\frac{\nabla u_{\e}}{|\nabla u_{\e}|}\otimes \frac{\nabla u_{\e}}{|\nabla u_{\e}|} a_{\e}^p= \frac{\nabla \Phi(u_{\e})}{|\nabla \Phi (u_{\e})|}\otimes \frac{\nabla \Phi(u_{\e})}{|\nabla \Phi(u_{\e})|} a_{\e}^p.$$
Since
$a_{\e}b_{\e}= |\nabla \Phi(u_{\e})|,$
using Lemma \ref{propo-equi}, we find that for any $\varphi\in C^{\infty}_{0}(\Omega),$
\begin{eqnarray*}
\lim_{\e\rightarrow 0}\int_{\Omega}\frac{\nabla \Phi(u_{\e})}{|\nabla \Phi (u_{\e})|}\otimes \frac{\nabla \Phi(u_{\e})}{|\nabla \Phi(u_{\e})|} a_{\e}^p \varphi dx&=& \lim_{\e\rightarrow 0} \int_{\Omega}\frac{\nabla \Phi(u_{\e})}{|\nabla \Phi (u_{\e})|}\otimes \frac{\nabla \Phi(u_{\e})}{|\nabla \Phi(u_{\e})|} |\nabla \Phi (u_{\e})| \varphi dx\\ &=& \int_{\Gamma} c_p \stackrel{\rightarrow}{n}\otimes \stackrel{\rightarrow}{n}\varphi d\mathcal{H}^{N-1}
\end{eqnarray*}
where the last equality follows from Reshetnyak's theorem \cite{Res} (see also the appendix in Luckhaus-Modica \cite{LM}) and the convergence (\ref{ener1bis}) in Lemma 
\ref{lem-ener}
This proves (\ref{2nu}). The proof of (\ref{4nu}) is similar.
\end{proof}
Now, we turn to the second variation formula for $E_{\e, p}$ and complete the proof of Theorem \ref{thm-ACp}.
\begin{proof}[Proof of Theorem \ref{thm-ACp}]
We use (\ref{SVformula}) in the proof of Proposition \ref{all_var} (see also \cite{Le}) to conclude that
\begin{multline}
\delta^{2} E_{\varepsilon}(u_{\varepsilon},\eta,\zeta) =\int_{\Omega} \left\{ \left( \frac{\e^{p-1} |\nabla u_\e|^p}{p} + \frac{W(u_\e)}{q\e}\right) \left(\text{div}\zeta + (\text{div}\eta)^2 -
\text{trace}((\nabla \eta)^2)\right) \right\}\\ 
-2\int_{\Omega}\e^{p-1}|\nabla u_\e|^{p-2}(\nabla u^{\varepsilon},\nabla u^{\varepsilon}\cdot\nabla\eta)\text{div}\eta\\ 
-2\int_{\Omega}\left(\e^{p-1}|\nabla u_\e|^{p-2}\nabla u^{\varepsilon}, \frac{1}{2}\nabla u^{\varepsilon}\cdot\nabla\zeta -\nabla u^{\varepsilon}\cdot (\nabla\eta)^2\right) \\
+ \int_{\Omega}\left(\e^{p-1}|\nabla u_\e|^{p-2}\abs{\nabla u^{\varepsilon}\cdot\nabla\eta}^2 + (p-2) \e^{p-1} (\nabla u_\e)^i (\nabla u_\e)^j|\nabla u_\e|^{p-4} (\nabla u^{\varepsilon}\cdot\nabla\eta)^i (\nabla u^{\varepsilon}\cdot\nabla\eta)^j\right).
\label{svep-p}\end{multline}
By letting $\e\rightarrow 0$ and using Lemma \ref{propo-Res}, we obtain
\begin{multline}\lim_{\varepsilon\rightarrow 0}\delta^{2} E_{\varepsilon, p}(u_{\varepsilon}, \eta, \zeta) =
c_p\int_{\Gamma}\left\{\text{div}\zeta + (\text{div}\eta)^2 -
\text{trace}((\nabla \eta)^2- 2(\stackrel{\rightarrow}{n},\stackrel{\rightarrow}{n}\cdot\nabla\eta) \text{div} \eta \right\}d\mathcal{H}^{N-1}\\ - 
2c_p \int_{\Gamma}(\stackrel{\rightarrow}{n}, \frac{1}{2}\stackrel{\rightarrow}{n}\cdot\nabla\zeta -\stackrel{\rightarrow}{n} \cdot (\nabla\eta)^2 )d\mathcal{H}^{N-1} + 
c_p\int_{\Gamma}\left[|\stackrel{\rightarrow}{n}\cdot \nabla \eta|^2 + (p-2)(\stackrel{\rightarrow}{n},\stackrel{\rightarrow}{n}\cdot\nabla\eta)^2\right]d\mathcal{H}^{N-1}.
\label{SVpp}
\end{multline} 
As in the proof of Theorem 1.1 in \cite{Le} (see (2.8) there), we find that
\begin{multline*}
 \text{div}\zeta + (\text{div}\eta)^2 -
\text{trace}((\nabla \eta)^2- 2(\stackrel{\rightarrow}{n},\stackrel{\rightarrow}{n}\cdot\nabla\eta) \text{div} \eta -
2 (\stackrel{\rightarrow}{n}, \frac{1}{2}\stackrel{\rightarrow}{n}\cdot\nabla\zeta -\stackrel{\rightarrow}{n} \cdot (\nabla\eta)^2 ) + 
|\stackrel{\rightarrow}{n}\cdot \nabla \eta|^2 \\=
\text{div}^{\Gamma}\zeta + (\text{div}^{\Gamma}\eta)^2 + \sum_{i=1}^{N-1}\abs{(D_{\tau_{i}}\eta)^{\perp}}^2 -
 \sum_{i,j=1}^{N-1}(\tau_{i}\cdot D_{\tau_{j}}\eta)(\tau_{j}\cdot D_{\tau_{i}}\eta) +  (\stackrel{\rightarrow}{n},\stackrel{\rightarrow}{n}\cdot\nabla\eta)^2.
\end{multline*}
Using $E_{\e}(\Gamma)= c_p E(\Gamma)$ and the second inner variation for $E$ given by (\ref{SVE}), we find that the right hand side of (\ref{SVpp}) is equal to 
$$\delta^{2}E_p(\Gamma,\eta,\zeta) + c_p\left[\int_{\Gamma} (\stackrel{\rightarrow}{n},\stackrel{\rightarrow}{n}\cdot\nabla\eta)^2d\mathcal{H}^{N-1} + (p-2)\int_{\Gamma}(\stackrel{\rightarrow}{n},\stackrel{\rightarrow}{n}\cdot\nabla\eta)^2 d\mathcal{H}^{N-1}\right].$$
Therefore, we obtain the desired formula stated in the theorem.
\end{proof}

\section{Second inner variations and stability of Ginzburg-Landau}
\label{sec-GLSV}
In this section, we prove Theorem \ref{mainSV}.

\begin{proof}[Proof of Theorem \ref{mainSV}]
Following the method of \cite{Serfaty} and arguing as in \cite{Le} using (\ref{discrep0}) and (\ref{discrep2}), we get the second conclusion of our theorem. Therefore, it remains to prove (\ref{discrep0}).

First of all, we have the following formula for the second inner variation of $E$ at $\Gamma$ (see Simon \cite[p. 51]{Simon}, for example)
\begin{equation}
 \delta^{2}E(\Gamma,\eta,\zeta) = \int_{\Gamma}\left ( \text{div}^{\Gamma}\zeta + (\text{div}^{\Gamma}\eta)^2 + 
\sum_{i=1}^{N-2}\abs{(D_{\tau_{i}}\eta)^{\perp}}^2 - \sum_{i,j=1}^{N-2}(\tau_{i}\cdot D_{\tau_{j}}\eta)(\tau_{j}\cdot D_{\tau_{i}}\eta)\right) d\mathcal{H}^{N-2},
\label{sve}\end{equation}
where $\text{div}^{\Gamma}\varphi$ denotes 
the tangential divergence of $\varphi$ on $\Gamma$;  for each $\tau\in T_{x}(\Gamma)$, $D_{\tau} \eta$ is the directional derivative and the normal part of $D_{\tau_{i}} \eta$ is denoted by 
$
 (D_{\tau_{i}} \eta)^{\perp} = D_{\tau_{i}} \eta -\sum_{j=1}^{N-2}(\tau_{j}\cdot D_{\tau_{i}} \eta)\tau_{j}. $

Let $(\cdot,\cdot)$ denote the inner product on $C^{N}$ identified with $(\RR^{2})^{N}$. This means that, for $a, b\in C^{N}$, we have
$(a, b) =\frac{1}{2} (a\overline{b} + \overline{a}b).$
For the second inner variation of $E_{\varepsilon}$ at $u_{\varepsilon}$, we use (\ref{SVformula}) in the proof of Proposition \ref{all_var} (see also \cite{Le}) to conclude that
\begin{multline}
\delta^{2} E_{\varepsilon}(u_{\varepsilon},\eta,\zeta) =\frac{1}{|log\e|}\int_{\Omega} \left\{ \left( \frac{\abs{\nabla u_{\varepsilon}}^2}{2} +
\frac{W(u_{\varepsilon})}{4\e^2}\right) \left(\text{div}\zeta + (\text{div}\eta)^2 -
\text{trace}((\nabla \eta)^2)\right) \right.\\ +  \abs{\nabla u_{\varepsilon}\cdot\nabla\eta}^2 + 
2(\nabla u_{\varepsilon},\nabla u^{\varepsilon}\cdot (\nabla\eta)^2) -
(\nabla u_{\e},\nabla u_{\e}\cdot\nabla\zeta)\\ \left.-2 (\nabla u_{\varepsilon},\nabla u_{\varepsilon}\cdot\nabla\eta)\text{div}\eta\right\} dx.
\label{svep}\end{multline}
We will pass the above expression to the limit $\e\rightarrow 0$. To do this, we need to study the convergence properties of $  \frac{1}{\abs{\log \e}}  \nabla u_{\varepsilon}\otimes\nabla u_{\varepsilon}$. 
Let  
$$\alpha_{ij,\e} =\frac{1}{|log\e|}\left(e_{\e}(u_{\e})\delta_{ij}-(\frac{\partial u_{\e}}{\partial x_i},\frac{\partial u_{\e}}{\partial x_j})\right),~\beta_{ij,\e}=\frac{1}{|log\e|}(\frac{\partial u_{\e}}{\partial x_i},\frac{\partial u_{\e}}{\partial x_j}).$$
Then
$$\alpha_{ij,\e} \rightharpoonup \alpha_{ij,\ast},~\beta_{ij,\e} \rightharpoonup \beta_{ij,\ast}~~\text{in the sense of Radon measures}.$$
Since $|\alpha_{ij,\ast}|\leq N \mu_{\ast}$ and $|\beta_{ij,\ast}|\leq N \mu_{\ast}$, we can write
$$\alpha_{ij,\ast}= A_{ij}(x) \mu_{\ast}~\text{and}~\beta_{ij,\ast}= B_{ij}(x) \mu_{\ast}.$$
Then for $\mathcal{H}^{N-2}$-a.e. $x\in \Gamma$, $A(x)= (A_{ij}(x))$ represents the orthogonal projection onto the $(N-2)$-dimensional tangent space $T_{x}\Gamma$ of $\Gamma$ (see \cite[pp. 498--499]{BBO}).  It follows that
\begin{equation}
trace (A)= N-2,~A^2=A.
\label{traceA}
\end{equation}
From the definition of $\alpha_{ij,\e}$, we find that
$$\beta_{ij,\ast} = \mu_{\ast}\delta_{ij}-\alpha_{ij,\ast}= (\delta_{ij}-A_{ij})\mu_{\ast}.$$
Note that the matrix $B(x)= (\delta_{ij}-A_{ij}(x))$ is symmetric and nonnegative definite with
$$trace (B)=2, B^2= B$$
by (\ref{traceA}).  Thus, for  $\mathcal{H}^{N-2}$-a.e. $x\in \Gamma$, we can find
two orthogonal
unit vectors  $\stackrel{\rightarrow}{p}(x)$ and $\stackrel{\rightarrow}{q}(x)$ in the normal space $(T_x\Gamma)^{\perp}$
such that
\begin{equation*}
  \frac{1}{\abs{\log \e}}  \nabla u_{\varepsilon}\otimes\nabla u_{\varepsilon} dx\rightharpoonup \left(
\stackrel{\rightarrow}{p}\otimes\stackrel{\rightarrow}{p} + \stackrel{\rightarrow}{q}\otimes\stackrel{\rightarrow}{q}\right)
\mu_{\ast} .
\end{equation*}
From the connectedness of $\Gamma$, we have by the Constancy Theorem \cite[Theorem 41.1]{Simon}, 
$$\mu_{\ast} = m\pi \mathcal{H}^{N-2}\lfloor\Gamma $$
where $m$ is a positive constant.  
In particular, we have
\begin{equation}
 \lim_{\e\rightarrow 0} E_{\e}(u_{\e}) = m\pi \mathcal{H}^{N-2} (\Gamma)\equiv m\pi E(\Gamma).
\label{appro2}
\end{equation}
The introduction of  the constant $\pi$ in the above equation was inspired by the fact that 
$E_{\e}$ Gamma-converges to 
$E_0(\Gamma):= \pi \mathcal{H}^{N-2} (\Gamma) $ (see \cite{ABO}).
Using (\ref{appro2}), we find
\begin{equation}
  \frac{1}{\abs{\log \e}}  \nabla u_{\varepsilon}\otimes\nabla u_{\varepsilon} dx\rightharpoonup m\pi\left(
\stackrel{\rightarrow}{p}\otimes\stackrel{\rightarrow}{p} + \stackrel{\rightarrow}{q}\otimes\stackrel{\rightarrow}{q}\right)
\mathcal{H}^{N-2}\lfloor\Gamma.
\label{resGL}
\end{equation}

This result is similar to identity (2.10) in \cite{Le}.  
We complexify the orthogonal planes to $\Gamma$ at each point by setting
$
 \stackrel{\rightarrow}{n}^{C} =  \stackrel{\rightarrow}{p} + i  \stackrel{\rightarrow}{q}.
$
Then we have
\begin{equation}
\frac{1}{\abs{\log \e}}  \nabla u_{\varepsilon}\otimes\nabla u_{\varepsilon} dx\rightharpoonup m\pi
\stackrel{\rightarrow}{n}^{C}\otimes\stackrel{\rightarrow}{n}^{C}
\mathcal{H}^{N-2}\lfloor\Gamma.
\label{resGL2}
\end{equation}
Passing to the limit in (\ref{svep}), employing (\ref{appro2}) and (\ref{resGL2}), we obtain
\begin{multline*}
 \lim_{\varepsilon\rightarrow 0}\delta^{2}E_{\varepsilon}(u_{\varepsilon},\eta,\zeta) =m \pi\int_{\Gamma}\left\{\text{div}\zeta + 
(\text{div}\eta)^2 - \text{trace} ((\nabla \eta)^2) \right\} d\mathcal{H}^{N-2}\\+ m\pi\int_{\Gamma}\left\{\abs{\stackrel{\rightarrow}{n}^{C}\cdot\nabla\eta}^2 + 
2(\stackrel{\rightarrow}{n}^{C},\stackrel{\rightarrow}{n}^{C}\cdot(\nabla\eta)^2) -(\stackrel{\rightarrow}{n}^{C}, \stackrel{\rightarrow}{n}^{C}\cdot\nabla\zeta) - 
2 (\stackrel{\rightarrow}{n}^{C},\stackrel{\rightarrow}{n}^{C}\cdot\nabla\eta)\text{div}\eta\right\}d\mathcal{H}^{N-2}.
\end{multline*}
In view of the identity
$
\text{div}^{\Gamma}\eta = \text{div}\eta - (\stackrel{\rightarrow}{n}^{C},\stackrel{\rightarrow}{n}^{C}\cdot\nabla\eta), $ the above equation becomes
                                                                                                          \begin{multline}
\lim_{\varepsilon\rightarrow 0}\delta^{2}E_{\varepsilon}(u_{\varepsilon},\eta,\zeta) = 
m\pi\int_{\Gamma}\left\{ \text{div}^{\Gamma}\zeta + (\text{div}^{\Gamma}\eta)^2 - \text{trace} ((\nabla \eta)^2)\right\}d\mathcal{H}^{N-2} \\+ 
m\pi\int_{\Gamma}\left\{\abs{\stackrel{\rightarrow}{n}^{C}\cdot\nabla\eta}^2 + 2(\stackrel{\rightarrow}{n}^{C},\stackrel{\rightarrow}{n}^{C}\cdot(\nabla\eta)^2) - 
(\stackrel{\rightarrow}{n}^{C},\stackrel{\rightarrow}{n}^{C}\cdot\nabla\eta)^2\right\}d\mathcal{H}^{N-2}.
\label{svreduced}
\end{multline}                                                                     
Some calculation using local coordinates shows that the right hand side of the above equation is the right hand side of (\ref{discrep0}), completing its proof. For 
the reader's convenience, we include the details. By the introduction of  the two vectors $\stackrel{\rightarrow}{p}(x)$ and $\stackrel{\rightarrow}{q}(x)$ in (\ref{resGL}), we can 
choose local coordinates so that $\{\tau_{1},\cdot,\tau_{N-2}, \stackrel{\rightarrow}{p}, \stackrel{\rightarrow}{q}\}$ is the orthonormal basis of $\RR^{N}$;
furthermore, $\stackrel{\rightarrow}{p} = (0,\cdots, 0,1, 0)$ and $\stackrel{\rightarrow}{q} = (0, \cdots, 0, 0,1)$. 
Note that

\begin{equation*}
 (n_{j}^{C}, n_{k}^{C}) = (p_{j} + i q_{j}, p_{k} + i q_{k}) = p_{j}p_{k} + q_{j}q_{k} = \delta_{(N-1)j}\delta_{(N-1)k} + \delta_{Nj}\delta_{Nk}.
\end{equation*}
We calculate successively, omitting
the superscript $C$.\\
(i) $(\nabla\eta)_{ij} = \frac{\partial\eta^{i}}{\partial x_{j}}$,\\
(ii) $((\nabla\eta)^2)_{ij} = \sum_{k}\frac{\partial\eta^{i}}{\partial x_{k}}\frac{\partial\eta^{k}}{\partial x_{j}}$,\\
(iii) $\text{trace} (\nabla\eta)^2 = \sum_{i} ((\nabla\eta)^2)_{ii} = 
\sum_{i,k}\frac{\partial\eta^{i}}{\partial x_{k}}\frac{\partial\eta^{k}}{\partial x_{i}}$,\\
(iv) \begin{eqnarray*}
2 (\stackrel{\rightarrow}{n}, \stackrel{\rightarrow}{n}\cdot (\nabla\eta)^2) = 
2\sum_{i,j}(n_{i},n_{j})((\nabla\eta)^2)_{ij} &=& 
2 ((\nabla\eta)^2)_{(N-1)(N-1)} + 2 ((\nabla\eta)^2)_{NN} \\ &=&
2\sum_{k}[\frac{\partial\eta^{N-1}}{\partial x_{k}}\frac{\partial\eta^{k}}{\partial x_{N-1}} +
\frac{\partial\eta^{N}}{\partial x_{k}}\frac{\partial\eta^{k}}{\partial x_{N}} ],
\end{eqnarray*}
(v)$(\stackrel{\rightarrow}{n},\stackrel{\rightarrow}{n}\cdot\nabla\eta)^2 =
 (\sum_{i,j}(n_{i},n_{j})\frac{\partial\eta^{i}}{\partial x_{j}})^2 = (\frac{\partial\eta^{N-1}}{\partial x_{N-1}}
+\frac{\partial\eta^{N}}{\partial x_{N}})^2$,\\
(vi) $\abs{\stackrel{\rightarrow}{n}\cdot\nabla\eta}^2 = \sum_{i=1}^{N}
\abs{(\sum_{j}\frac{\partial\eta^{j}}{\partial x_{i}}n_{j})}^2 = 
\sum_{i}\left (\abs{\frac{\partial\eta^{N-1}}{\partial x_{i}}}^2+ \abs{\frac{\partial\eta^{N}}{\partial x_{i}}}^2\right)$,\\
(vii) \begin{eqnarray*}(D_{\tau_{i}}\eta)^{\perp} = D_{\tau_{i}} \eta -\sum_{j=1}^{N-2}(\tau_{j}\cdot D_{\tau_{i}} \eta)\tau_{j} &=&
(\frac{\partial\eta^{1}}{\partial x_{i}}, \cdots,\frac{\partial\eta^{N}}{\partial x_{i}}) -\sum_{j\leq N-2}\frac{\partial\eta^{j}}{\partial x_{i}}\tau_{j}\\& =& 
(0,\cdots,0, \frac{\partial\eta^{N-1}}{\partial x_{i}}, \frac{\partial\eta^{N}}{\partial x_{i}}),
\end{eqnarray*}
(viii) $\sum_{i\leq N-2}\abs{(D_{\tau_{i}}\eta)^{\perp}}^2 = \sum_{i\leq N-2}\left (
\abs{\frac{\partial\eta^{N-1}}{\partial x_{i}}}^2+ 
\abs{\frac{\partial\eta^{N}}{\partial x_{i}}}^2\right),$\\
(ix) $\tau_{i}\cdot D_{\tau_{j}}\eta = \frac{\partial\eta^{i}}{\partial x_{j}}$,\\
(x) $\sum_{i,j\leq N-2} (\tau_{i}\cdot D_{\tau_{j}}\eta)(\tau_{j}\cdot D_{\tau_{i}}\eta) =
 \sum_{i,j\leq N-2}\frac{\partial\eta^{i}}{\partial x_{j}}\frac{\partial\eta^{j}}{\partial x_{i}}$.\\
Let 
\begin{align*}
 M&= - \text{trace} ((\nabla \eta)^2) + \abs{\stackrel{\rightarrow}{n}\cdot\nabla\eta}^2 + 
2(\stackrel{\rightarrow}{n},\stackrel{\rightarrow}{n}\cdot(\nabla\eta)^2) - (\stackrel{\rightarrow}{n},\stackrel{\rightarrow}{n}\cdot\nabla\eta)^2,\\
 N&= \sum_{i=1}^{N-2}\abs{(D_{\tau_{i}}\eta)^{\perp}}^2 - \sum_{i,j=1}^{N-2}(\tau_{i}\cdot D_{\tau_{j}}\eta)(\tau_{j}\cdot D_{\tau_{i}}\eta)\\
&=  \sum_{i\leq N-2}\left (
\abs{\frac{\partial\eta^{N-1}}{\partial x_{i}}}^2+ 
\abs{\frac{\partial\eta^{N}}{\partial x_{i}}}^2\right) - \sum_{i,j\leq N-2}\frac{\partial\eta^{i}}{\partial x_{j}}\frac{\partial\eta^{j}}{\partial x_{i}}.
\end{align*}
Observe from (iii) and (iv) that
\begin{multline*}
 - \text{trace} ((\nabla \eta)^2) =
-\sum_{i,k\leq N-2}\frac{\partial\eta^{i}}{\partial x_{k}}\frac{\partial\eta^{k}}{\partial x_{i}}
-2 (\stackrel{\rightarrow}{n},\stackrel{\rightarrow}{n}\cdot(\nabla\eta)^2)\\
+\sum_{N-1\leq i\leq N}\left(\frac{\partial\eta^{i}}{\partial x_{N-1}}\frac{\partial\eta^{N-1}}{\partial x_{i}}
+\frac{\partial\eta^{i}}{\partial x_{N}}\frac{\partial\eta^{N}}{\partial x_{i}}\right).
\end{multline*}
(xi) From (v), (vi) and (x), we have \begin{multline*}
       M-N =
\sum_{N-1\leq i\leq N}\left(\frac{\partial\eta^{i}}{\partial x_{N-1}}\frac{\partial\eta^{N-1}}{\partial x_{i}}
+\frac{\partial\eta^{i}}{\partial x_{N}}\frac{\partial\eta^{N}}{\partial x_{i}}\right) + 
\sum_{i}\left (\abs{\frac{\partial\eta^{N-1}}{\partial x_{i}}}^2+ \abs{\frac{\partial\eta^{N}}{\partial x_{i}}}^2\right)\\
 - (\frac{\partial\eta^{N-1}}{\partial x_{N-1}}
+\frac{\partial\eta^{N}}{\partial x_{N}})^2 -\sum_{i\leq N-2}\left (
\abs{\frac{\partial\eta^{N-1}}{\partial x_{i}}}^2+ 
\abs{\frac{\partial\eta^{N}}{\partial x_{i}}}^2\right)\\
= (\frac{\partial\eta^{N-1}}{\partial x_{N}}
+\frac{\partial\eta^{N}}{\partial x_{N-1}})^2 + (\frac{\partial\eta^{N-1}}{\partial x_{N-1}}
-\frac{\partial\eta^{N}}{\partial x_{N}})^2 \\
= \abs{D_{x_{N-1}, x_{N}} (\eta^{N-1},\eta^{N})}^2 - 2 Jac_{x_{N-1}, x_{N}} (\eta^{N-1},\eta^{N}). 
      \end{multline*}
Thus from (\ref{svreduced}) and (\ref{sve}), we find that
\begin{equation*}
\lim_{\varepsilon\rightarrow 0}\delta^{2}E_{\varepsilon}(u_{\varepsilon},\eta,\zeta) 
= m\pi\delta^{2}E(\Gamma,\eta,\zeta) + 
m\pi\int_{\Gamma} \left(\abs{D_{\perp} (\eta^{\perp})}^2 - 2Jac_{\perp} (\eta^{\perp})\right) d\mathcal{H}^{N-2}.
\end{equation*}
The proof of Theorem \ref{mainSV} is now complete.
\end{proof}
\subsection*
{Acknowledgement.}
This paper grew out of a question that Peter Sternberg and Kevin Zumbrun asked the author in December 2012 about the relation between second inner 
variation and Poincar\'e inequality for area-minimizing surfaces with volume constraint. It is a great pleasure to thank them for 
this intriguing question and other interesting discussions related to the subject of this paper. The author 
is grateful to the anonymous referee for his/her careful reading and constructive comments which resulted in a hopefully
improved version of the original manuscript.

{} 

\end{document}